\newtheorem{theorem}{Theorem}[section]
\newtheorem{lemma}[theorem]{Lemma}
\newtheorem{corollary}[theorem]{Corollary}
\theoremstyle{remark}
\newcommand{\ba}{\begin{array}}
\newcommand{\ea}{\end{array}}
\newtheorem{example}{Example}
\newcommand{\trace}{\mathop{\mathrm{tr}}}
\DeclareMathOperator{\dive}{div}
\title{ Ergodic theorem for the differential equations with interaction}
\author{Andrey Dorogovtsev, Suli Liu, Kateryna Hlyniana }
\begin{document}

\maketitle

\begin{abstract}
For a differential equation with interaction, we investigate its ergodic properties. We apply the obtained results to study the limiting behavior of braid invariants associated with the flow of solutions. 
\end{abstract}
\bigskip

\noindent\textbf{Keywords:} Liouville's theorem, equations with interaction, braid invariants, ergodic theorem.


\pagestyle{myheadings}
\thispagestyle{plain}
\markboth{}{}

\section{Introduction}
In this paper, we investigate differential equations with interaction. This type of equation was originally introduced in the stochastic setting by Andrey A. Dorogovtsev  \cite{Andrey-1-2003}-\cite{Andrey-book}.

We consider the deterministic version of such equations, which take the following form:
\begin{equation}\label{eq-interaction}
\begin{cases}
{\rm d}x(u,t)=a(x(u,t),\mu_t){\rm d}t,\\	
x(u,0)=u,\,\,\,\, u\in \mathbb{R}^2,\\
\mu_t=\mu_0\circ x(\cdot,t)^{-1}.
\end{cases}
\end{equation}
Here, the measure $\mu_0$ on $\mathbb{R}^2$ represents the initial distribution of mass of particles in the medium.
 The coefficient $a(x,\mu)$ depends on the spatial variable $x$ and the measure $\mu,$ which describes the mass of particles in space.
 The trajectory of a particle starting at point $u\in \mathbb{R}^2$ is given by the funcion $\{x(u,t),\ t\geq0\}.$
The measure $\mu_t$ describes the distribution of the mass of particles at time $t$.
It has been shown that if the coefficient of the equation \eqref{eq-interaction} satisfies the Lipschitz condition with respect to spatial and the measure-valued variable,
 then there exists a unique solution to \eqref{eq-interaction} (see P78 in \cite{Andrey-book}).
Moreover, for each fixed $t\geq 0$ the mapping $x(\cdot, t): \mathbb{R}^2\rightarrow \mathbb{R}^2$ is a homeomorphism (see P87 in \cite{Andrey-book}).

The aim of our investigation is to describe the limiting behavior for invariants of braids driven by the flow of solutions. More precisely, let us fix $n$ points $u_1, \ldots, u_n \in \mathbb R^2$ and consider the corresponding trajectories:
$$\{x(u_1,t),\ldots, x(u_n,t)\}_{t\geq 0}.$$
These trajectories form  an $n-$braids by $n$ strings in the space $\mathbb R^2\times [0, T],$ connecting two parallel planes  $\mathbb R^2\times {0}$ and $\mathbb R^2\times {T}.$ For convenience, we represent the horizontal plane as the complex plane $\mathbb C,$  Thus, the trajectory of a particle  is described by function
$$z(u,t)= x_1(u,t)+ix_2(t,t) \in \mathbb C.$$
Under general assumptions of the equation with interaction, the strings cannot cross through each other. Consequently, a geometrical braid on $n$ trajectories can be represented as a single curve:
$$
b(t) = (z(u_1,t),\ldots, z(u_2,t))
$$
in the configuration space $\{\mathbb C^n: z_i\neq z_j \text{ for } i\neq j\}.$
These braids represent topological properties of the motion and have been studied in various contexts (see, for example, \cite{Arnold1978},  \cite{Moore1993},\cite{Clausen1998}).

The simplest invariant of such a braid is the winding number, which counts how many times two strings in a braid wrap around each other.
The winding number can be defined via the Gauss integral:
$$
\varphi(u_1, u_2, T) = \frac{1}{2\pi i}\int_0^T\frac{dz(u_1,t) - dz(u_2,t)}{z(u_1,t)-z(u_2,t)}
$$

V.A. Vassiliev introduced higher-order invariants for knots and braids \cite{Vassiliev1990}. His definition of finite-type (or Vassiliev) invariants is based on the observation that the space of knots forms a topological space. The knot invariants can be thought of as the locally constant functions on this space (see more details, for example, in \cite{Chmutov2012}). The system of Vassiliev's invariant is known to be complete in the sense that two braids are homotopically equivalent if and only if all their Vassiliev's invariants coincide \cite{BarNatan1995}.  M. Kontsevich showed that Vassiliev invariants can be expressed via integral formulas \cite{Kontsevich1995}, providing a powerful copmutational tool \cite{Berger2001}. 
The so-called Kontsevich integrals are constructed using winding numbers between each pair of strings in a braid. Define the following one-form:
$$
\omega_{i,j}(t) =  \frac{1}{2\pi i}\frac{dz(u_i,t) - dz(u_j,t)}{z(u_i,t)-z(u_j,t)}.
$$
Then the Vassiliev invariants of order $m$ can be represented as a linear combination of the integrals of the form:
$$
I_m(\vec u, t)=\int_{\Delta_m} \omega_{i_1, j_1}(t_1)\wedge \ldots \wedge \omega_{i_m, j_m}(t_m),
$$
where $\vec u \in \mathbb R^{2\times m},$ and 
$$\Delta_m = \{(t_1, \ldots, t_m): \ 0\leq t_1\leq \ldots \leq t_m\leq t\}$$
is the $m-$ dimensional simplex. Here, we use the standard notation for wedge product of differential forms. For instance, if $\beta(t)=\sum_{j=1}^k b_j(t)dt_{j_1}\wedge dt_{j_k}$  then the antisymmetry relation
$
dt_j\wedge dt_k = - dt_k\wedge dt_j
$
holds.

The goal of this paper is to analyze the asymptotic behavior of such integrals in the case where braids are generated by solutions to a differential equation with interaction. Specifically, we study the asymptotic behavior of Kontsevich integrals averaged with respect to the initial mass distribution. More precisely, we will find out conditions on the coefficient $a$ under which the limit
$$
\lim_{t\to\infty} C_m(t) \int_{\mathbb R^{2\times m}} I_m(\vec u, t) \mu_0^m(d\vec u).
$$
exists, where $C_m$ is an appropriate normalization factor and $\mu_0^{\otimes m}$ denotes the $m$-fold product of the initial distribution $\mu_0$.

We should note that the differential equation with interaction can be regarded as a model for $N-$vortex system \cite{Arnold-book}. More precisely, consider $N$ vortices with circulations, $\Gamma_i,$ $i=1,\ldots, N,$  located in the plane $\mathbb R^2.$
In this case, the vortices at any time are concentrated at $N$ points.  We denote  the position of a vortex at time $t$ by
\[
V(u,v;t):=\begin{bmatrix} x(u, t) \\ y(v, t) \end{bmatrix}, \quad \begin{bmatrix} x(u, 0) \\ y(v, 0) \end{bmatrix}=\begin{bmatrix} u \\ v \end{bmatrix}
\]
with $(u,v)\in \mathbb R^2.$  It is convenient to describe the vortex dynamics as a system in the configuration space $\mathbb{R}^{2N}$.
The evolution of vortices is governed by the Hamiltonian system:
  $$
  \renewcommand{\arraystretch}{1.5}
\left\{\begin{array}{l}
d x\left(u_{k}, t\right)=\Gamma_{k} \frac{\partial H}{\partial v_{k}}\left(x\left(u_{1}, t\right), \ldots, x\left(u_{N}, t\right), y\left(v_{1}, t\right), \ldots, y\left(v_{N}, t\right)\right) d t \\
d y\left(v_{k}, t\right)=-\Gamma_{k} \frac{\partial H}{\partial u_{k}}\left(x\left(u_{1}, t\right), \ldots, x\left(u_{N}, t\right), y\left(v_{1}, t\right), \ldots, y\left(v_{N}, t\right)\right) d t \\
k=1, \ldots, N
\end{array}\right.
$$
where Hamiltonian is defined as
$$
H\left(u_{1},\ldots, u_{k}, v_{1}, \ldots, v_{k}\right)=-\frac{1}{2 \pi} \sum_{i \neq j} \frac{1}{\Gamma_{i} \Gamma_{j}} \ln \sqrt{\left(u_{i}-u_{j}\right)^{2}+\left(v_{i}-v_{j}\right)^{2}}
$$
Calculating partial derivatives of $H,$ we obtain the right-hand side of the Hamiltonian equations as:

$$
\begin{aligned}
\Gamma_{k} \frac{\partial H}{\partial v_{k}}(x(\vec{u}, t), y(\vec{v}, t)) & =-\frac{1}{\pi} \Gamma_{k} \sum_{j \neq k} \frac{1}{\Gamma_{k} \Gamma_{j}} \frac{y\left(v_{k}, t\right)-y\left(v_{j}, t\right)}{\left(x\left(u_{k}, t\right)-x\left(u_{j}, t\right)\right)^{2}+\left(y\left(v_{k}, t\right)-y\left(v_{j}, t\right)\right)^{2}}= \\
& =-\frac{1}{\pi} \sum_{j \neq k} \frac{1}{\Gamma_{j}} \frac{y\left(v_{k}, t\right)-y\left(v_{j}, t\right)}{\left(x\left(u_{k}, t\right)-x\left(u_{j}, t\right)\right)^{2}+\left(y\left(v_{k}, t\right)-y\left(v_{j}, t\right)\right)^{2}}= \\
& =-\frac{1}{\pi} \int \varphi\left(x\left(u_{k}, t\right), y\left(v_{k}, t\right), u_{0}, v_{0}\right) \mu_{t}\left(d u_{0}, d v_{0}\right)
\end{aligned}
$$
where $\varphi\left(u, v, u_{0}, v_{0}\right)=\frac{v-v_{0}}{\left(u-u_{0}\right)^{2}+\left(v-v_{0}\right)^{2}} \mathbb{I}_{\left\{v \neq v_{0}\right\}}$
and $\mu_t$ is a push-forward of a discrete measure $\mu_0$ given by
$$
\mu_{0}=\sum_{j=1}^{N} \frac{1}{\Gamma_{j}} \delta_{\left(u_{j}, v_{j}\right)} ; \quad \mu_{t}=\mu_{0} \cdot V(\cdot, t)^{-1}=\sum_{j=1}^{N} \frac{1}{\Gamma_{j}} \delta_{V\left(u_{j}, v_{j}; t\right)}
$$
Similarly,

$$
\Gamma_{k} \frac{\partial H}{\partial u_{k}}=-\frac{1}{\pi} \int \psi\left(x\left(u_{k}, t\right), y\left(v_{k}, t\right), u_{0}, v_{0}\right) \mu_{t}\left(d u_{0}, d v_{0}\right)
$$
where $\psi\left(u, v, u_{0}, v_{0}\right)=\frac{u-u_{0}}{\left(u-u_{0}\right)^{2}+\left(v-v_{0}\right)^{2}} \mathbb{I}_{\left\{u{\neq u_{0}}\right\}}.$
Hence, the Hamiltonian system can be rewritten as a differential equation with interaction:
$$
\left\{\begin{array}{l}
d x(u, t)=-\frac{1}{\pi} \int_{\mathbb{R}^{2}} \varphi\left(x(u, t), y(v, t), u_{0}, v_{0}\right) \mu_{t}\left(d u_{0} d v_{0}\right) d t \\
d y(v, t)=\frac{1}{\pi} \int_{\mathbb{R}^{2}} \psi\left(x(u, t), y(v, t), u_{0}, v_{0}\right) \mu_{t}\left(d u_{0} d v_{0}\right) d t, \\
x(u, 0)=u, \quad u\in \mathbb R,  \\
y(v, v)=v, \quad u\in \mathbb R,  \\
\mu_{t}=\mu_{0} \cdot(x(\cdot, t), y(\cdot, t))^{-1},
\end{array}\right.
$$
with $\mu_0= \sum_{j=1}^{N} \frac{1}{\Gamma_{j}} \delta_{\left(u_{j}, v_{j}\right)}.$
In general, the circulations  $\Gamma_k$ can be negative, in which case $\mu_0$ becomes a signed measure rather than a probability measure. Importantly, this formulation allows us to extend the dynamics beyond the $N$ vortices to all particles in the flow, for all initial points $(u, v) \in \mathbb{R}^2$. 

In the special case of discrete measure $\mu_{0}=\sum_{j=1}^{N} \frac{1}{\Gamma_{j}} \delta\left(u_{j}, v_{j}\right)$
the heavy points 
$$\begin{bmatrix} x(u_i, t) \\ y(v_i, t) \end{bmatrix},
\quad i= 1,\ldots, N
$$ 
represent the $N$ vortices. The Hamiltonian system determines the dynamics of these vortices.
Once their motion is known, we can describe the behavior of other particles with negligible mass—interpreted as `dust particles'—in the flow generated by the vortices.
For such `zero-mass' particles, the equation becomes:
$$
\left\{\begin{array}{l}
d x(u, t)=-\frac{1}{\pi} \sum_{j=1}^{N} \frac{y(v, t)-y\left(v_{j}, t\right)}{\left( x(u, t)-x\left(u_{j}, t\right)\right)^{2}+\left(y(v, t)-y\left(v_{j}, t\right)\right)^2}d t \\
d y(v, t)=\frac{1}{\pi} \sum_{j=1}^{N} \frac{x(u, t)-x(u_j, t)}{\left(x(u, t)-x\left(u_{j}, t\right)\right)^{2}+\left(y(v, t)-y\left(v_{j,}, t\right)^{2}\right.} d t \\
(u, v) \in \mathbb{R}^{2} \backslash\left\{\left(u_{i}, v_{i}\right)\right\}_{j=1}^{N}
\end{array}\right.
$$
This system describes the motion of dust particles in the presence of $N$ vortices. Such interpretation of `zero-mass' particles was considered in the work by  V. Yu. Belashov \cite{belashov2017interaction}, where the author demonstrated $N-$vortex simulation and behaviour of dust particles.

We can generalize the model by considering not only discrete initial measures but also absolutely continuous ones with respect to the Lebesgue measure. Informally, this corresponds to a flow consisting of an infinite number of infinitesimal vortices continuously distributed in space, each contributing to and influenced by the global vorticity field. In this setting, the point-vortex dynamics transitions into a continuum model that describes the evolution of a smooth vorticity distribution. Such a case is described by a general equation with interaction.

There is a deep connection between the invariants of a vector field and braids formed by trajectories induced by this field.  In the study of force-free magnetic fields, L. Woltjer and J-J. Moreau, independently discovered that such fields possess a conserved quantity known as helicity \cite{Woltjer1958, Moreau1961}. 
Let $\mathbb S^3$ denote the 3-dimensional sphere equipped with a volume form $\mu$ and let $\mathbf V$ be a vector field that preserves the volume $\mu-$.  The \textit{helicity} of a vector field $\mathbf{V}= \mathbf{V}(\mathbf{x},t)$ is defined by the integral
$$
\mathcal{H}= \int_{\mathbb S^3} \mathbf{V}\cdot (\nabla \times \mathbf{V}) d\mu.
$$
 H.K. Moffatt demonstrated that helicity is invariant under volume-preserving diffeomorphisms of a flow. He also pointed out that helicity is related to knot theory and, more precisely, to the linking number \cite{Moffatt1969}. This connection was further investigated by V. Arnold, who interpreted helicity as the average linking number of the field's trajectories \cite{Arnold1973}.
To make this precise, let $lk_{\mathbf{V}} (x_1,x_2; T_1, T_2):= lk(\Gamma_1, \Gamma_2)$ denote the linking number of the oriented curves $\Gamma_1$, $\Gamma_2$ formed by tracing the trajectories of $\mathbf V$ starting $x_1, \ x_2$ for time intervals $[0, T_1]$ and $[0, T_2]$ respectively. The \textit{asymptotic liking number} between these two  trajectories is then defined as   
$$
\lambda(x_1, x_2):=\lim_{T_1, T_2\to \infty} \frac{lk_{\mathbf{V} }(x_1,x_2; T_1, T_2)}{T_1 T_2},
$$
provided the limit exists.
V. I. Arnold proved the existence of this limit $(\mu\times \nu)$ for divergence-free vector fields and smooth measures \cite{Arnold1973}. This was later generalized by G. Cortreras and R. Iturriaga to arbitrary Borel probability measure on compact manifolds \cite{ContrerasIturriaga1999}.
Further, T. Vogel showed that this limit exists as an element of the space $L_1(\mathbb S^3\times \mathbb S^3)$ of Lebesgue integrable functions and is independent of the system of geodesics for any divergence-free field $\mathbf{V}$ \cite{Vogel2003}.

Moreover, E. Kudryavtseva \cite{Kudryavtseva2016}, and independently A. Enciso, D. Peralta-Salas, and F. Torres de Lizaur \cite{Enciso2016}, proved that any regular integral invariant of volume-preserving transformations is equivalent to helicity. That is, helicity is the only integral invariant of incompressible flows whose derivative is continuous in the $C^1$ topology.

In \cite{BaaderMarche2012}, the authors analyze the asymptotic growth of Vassiliev invariants on the flow lines of ergodic vector fields. They show that the helicity of a vector field completely determines the asymptotic behavior of its Vassiliev invariants. This establishes a strong connection between finite-type invariants and classical topological invariants arising in fluid dynamics.

The reverse connection—from knot theory to dynamical systems—was developed in \cite{SongBanksDiaz2006}, where the authors proposed an explicit construction of dynamical systems, defined via differential equations, that realize any given knot type. This demonstrates that not only do vector fields generate braids and knots, but given a knot, one can construct a flow whose trajectories reproduce it.

Since the origin of differential equations with interaction comes from the stochastic case, we also mention some related results in this context.
The asymptotic behaviour of the  angle $\phi(t), $ describing how a two-dimensional Brownian motion started from a non-zero point winds around the origin up
to time $t,$ was studied by F. Spitzer \cite{Spitzer1958}, Z. Shi \cite{Shi1994}, J. Bertoin and W. Werner \cite{BertoinWerner1994}.
V.A. Kuznetsov \cite{Kuznetsov2015b} investigated the possibility of obtaining the large-deviation principle for the winding angle of the Brownian motion.
The asymptotic behavior of mutual winding angles of several two-dimensional Brownian motions has been analyzed in various settings: for independent Brownian motions by  M. A. Berger and P. H. Roberts \cite{BergerRoberts1988}, M Yor \cite{Yor1991};
 for polarized Brownian stochastic flows by C. L. Zirbel and W. A. Woyczyński \cite{ZirbelWoyczynski2002}; and for isotropic Brownian stochastic flows by V. A. Kuzntesov \cite{Kuznetsov2017}.

It is important to note that each trajectory driven by a differential equation with interaction depends on the configuration of all other particles in the flow. In such settings, it is natural to expect that invariants of braids formed by these trajectories should be described in terms of the measure that represents the collective position of particles.
 This is the approach we adopt in the present work. Specifically, we study properties of measure-valued solutions to differential equations with interaction and show that braid invariants can be expressed in terms of these solutions. Furthermore, we use the ergodic theorem to analyze the asymptotic behavior of these braid invariants. Since the functional of interest depends on a measure-valued variable, verifying the conditions under which the ergodic theorem applies becomes a non-trivial task. 

 The paper is organized as follows. In Section 2, we discuss properties of the solution to the differential equation with interaction. We will concentrate on the evolutionary property of measure-valued solutions. Section 3 is devoted to an analog of the classical Liouville theorem for differential equations with interaction. In this section, we will prove that under conditions closely resembling the classical case, the solution $x(\cdot, t)$ preserves volume in $\mathbb R^d.$ In Section 4, we extend the Liouville-type result to the measure-valued setting. We identify a class of measures on the space of probability measures that remain invariant under the flow generated by measure-valued solutions.

In Section 5, we study the limiting behavior of braid invariants associated with the flow.

Section 6 is devoted to the ergodic theorem for measure-valued solutions to equations with interactions. Here, we write conditions on the coefficient of the equation under which one can apply the ergodic theorem. 

Each section of the paper contains examples demonstrating how the results can be applied to specific cases of equations. 

An alternative approach to establishing the existence of invariant measures is presented in Section 5. We describe conditions under which a measure-valued solution remains supported within a bounded closed domain in $\mathbb{R}^d$.

 \section{Evolution property for flows with interaction}
In this section, we introduce a semi-group generated by the solutions to the equation with interaction \eqref{eq-interaction}. In the introduction, we mentioned the known result about the existence and uniqueness of the solution when the coefficient $a$ satisfies the Lipschitz condition with respect to both variables.   For this reason, we restrict ourselves to this case. It is important to note that, generally, solutions to equation \eqref{eq-interaction} do not satisfy the evolutionary (semigroup) property. That is, in general, for a fixed point $u\in \mathbb R^2$
$$x_{0, \mu_s}(x_{0,\mu_0}(u,s),t)\neq x_{0,\mu_0}(u, t+s),$$
where $x_{0,\mu_0}$ denotes a solution to \eqref{eq-interaction} starting from time $0$ with initial mass distribution $\mu_0$.
The reason is that the motion of one point depends on the mass distribution of the system.
However, the process of mass distribution $\mu_t$ itself  evolves deterministically based on the initial measure $\mu_0$ and the flow map $x(\cdot, t),$ via
$$\mu_t = \mu_0\circ x(\cdot, t)^{-1}.$$
Thus, instead of tracking the  family $\{x_{0,\mu_0}(u, t), \ t\geq 0\},$ we focus on the evolution of the measure $\mu_t.$

Let  $\mathfrak{M}_0$ be the space of all probability measures on $\mathbb{R}^2$ with Wasserstein distance $\gamma_0$,
then $(\mathfrak{M}_0, \gamma_0)$ is a complete separable metric space
(P420, \cite{Dudley}).
Define a map $\Phi_t: \mathfrak{M}_0\rightarrow \mathfrak{M}_0$ as follows. For $\mu_0 \in \mathfrak{M}_0$, let $\mu_t$, $t\geq 0$, be a measure-valued function determined by equation~\eqref{eq-interaction}. Set
$$\Phi_t(\mu_0):=\mu_t.$$

\begin{lemma}
    The map $\Phi_t$, $t\geq 0$, satisfies the evolutionary property
\[
  \Phi_{t+s}=\Phi_{t}\circ\Phi_s\quad t,s \geq 0.
\]
\end{lemma}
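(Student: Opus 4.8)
The strategy is to reduce the semigroup identity for the measure flow to the uniqueness of solutions to the equation with interaction. Fix $\mu_0 \in \mathfrak{M}_0$ and $s \geq 0$. Let $\{x(u,t)\}_{t\geq 0}$ be the flow solving \eqref{eq-interaction} with initial data $\mu_0$, so that $\mu_t = \mu_0 \circ x(\cdot,t)^{-1}$ and $\Phi_t(\mu_0) = \mu_t$. Now consider the ``restarted'' problem: solve \eqref{eq-interaction} from time $0$ with initial mass distribution $\mu_s$, and call its flow $\{y(v,r)\}_{r\geq 0}$, so that $\Phi_r(\mu_s) = \mu_s \circ y(\cdot,r)^{-1}$. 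I want to show $\mu_s \circ y(\cdot,r)^{-1} = \mu_{s+r}$ for all $r \geq 0$, which is exactly $\Phi_r(\Phi_s(\mu_0)) = \Phi_{r+s}(\mu_0)$; since $s$ and $r$ are arbitrary nonnegative reals, relabeling gives $\Phi_{t+s} = \Phi_t \circ \Phi_s$.

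The key step is to build, from the original flow, a candidate solution to the restarted problem and then invoke uniqueness. Define $\tilde{x}(u,r) := x\bigl(x(\cdot,s)^{-1}(u), s+r\bigr)$ for $u \in \mathbb{R}^2$; this is well-defined because $x(\cdot,s)$ is a homeomorphism of $\mathbb{R}^2$ (stated in the introduction, P87 of \cite{Andrey-book}). I claim $\{\tilde{x}(u,r)\}_{r\geq 0}$ together with the measure family $\tilde{\mu}_r := \mu_{s+r}$ solves the equation with interaction started from $\mu_s$. First, $\tilde{x}(u,0) = x(x(\cdot,s)^{-1}(u), s) = u$, so the initial condition holds. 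Second, differentiating in $r$ and using that $x$ solves \eqref{eq-interaction}, $\tfrac{d}{dr}\tilde{x}(u,r) = a\bigl(x(x(\cdot,s)^{-1}(u), s+r), \mu_{s+r}\bigr) = a(\tilde{x}(u,r), \tilde{\mu}_r)$, which is the required ODE. Third, I need the consistency relation $\tilde{\mu}_r = \mu_s \circ \tilde{x}(\cdot,r)^{-1}$: since $\tilde{x}(\cdot,r) = x(\cdot, s+r) \circ x(\cdot,s)^{-1}$ as maps, we get $\mu_s \circ \tilde{x}(\cdot,r)^{-1} = \bigl(\mu_0 \circ x(\cdot,s)^{-1}\bigr) \circ \bigl(x(\cdot, s+r) \circ x(\cdot,s)^{-1}\bigr)^{-1} = \mu_0 \circ x(\cdot,s+r)^{-1} = \mu_{s+r} = \tilde{\mu}_r$, using the elementary push-forward identity $(\nu \circ f^{-1}) \circ (g \circ f^{-1})^{-1} = \nu \circ g^{-1}$. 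Hence $(\tilde{x}, \tilde{\mu})$ is a genuine solution of the restarted problem.

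By the uniqueness part of the existence-uniqueness theorem (P78 of \cite{Andrey-book}), the restarted flow $y$ and the measure family $\Phi_\bullet(\mu_s)$ coincide with $\tilde{x}$ and $\tilde{\mu}$ respectively. In particular $\Phi_r(\mu_s) = \tilde{\mu}_r = \mu_{s+r} = \Phi_{r+s}(\mu_0) = \Phi_{r+s}(\mu_0)$, i.e. $\Phi_r(\Phi_s(\mu_0)) = \Phi_{r+s}(\mu_0)$, which is the claim.

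The main obstacle I anticipate is not conceptual but a matter of bookkeeping: one must be careful that the flow map $x(\cdot,s)$ is invertible (handled by the cited homeomorphism property) and that the measure-valued consistency condition is preserved under the change of variables — the push-forward algebra is routine but must be written cleanly so the composition $x(\cdot,s+r)\circ x(\cdot,s)^{-1}$ is manipulated correctly. A secondary point worth a sentence is measurability/continuity in $r$ of $\tilde{x}$, needed so that ``solution'' is meant in the same sense as in the cited uniqueness result; this follows immediately from the corresponding regularity of $x$ in its time variable. No genuinely hard analysis is involved once uniqueness is taken as given.
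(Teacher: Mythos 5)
Your argument is correct and follows essentially the same route as the paper: restart the equation with interaction at time $s$ from the measure $\mu_s$, identify its measure-valued solution with $r\mapsto\mu_{s+r}$, and conclude $\Phi_r(\Phi_s(\mu_0))=\mu_{s+r}=\Phi_{s+r}(\mu_0)$. The only difference is one of detail: the paper simply writes down the restarted system and reads off this identity, whereas you justify it by exhibiting the candidate $\tilde x(u,r)=x\bigl(x(\cdot,s)^{-1}(u),s+r\bigr)$, $\tilde\mu_r=\mu_{s+r}$ and invoking uniqueness, thereby making explicit the step the paper leaves implicit.
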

\begin{proof}
   Indeed, for $\mu_0\in\mathfrak{M}_0$ consider the equation
\begin{equation*}
\begin{cases}
{\rm d}x_{s,\mu_s}(u,t+s)=a(x_{s,\mu_s}(u,t+s),\mu_{t+s}){\rm d}t,\\	
x_{s,\mu_s}(u,0)=u,\ u\in \mathbb{R}^d,\\
\mu_{t+s}=\mu_{s}\circ x_{s,\mu_s}(\cdot,t+s)^{-1}.
\end{cases}
\end{equation*}
Then
\begin{align*}
  \Phi_{t}\circ\Phi_s(\mu_0) & =\Phi_{t}(\mu_s)=\mu_s\circ x_{s, \mu_s}(\cdot, t+s)^{-1}=\mu_{t+s}=\Phi_{t+s}(\mu_0),
\end{align*}
so, $\Phi_{t+s}=\Phi_{t}\circ\Phi_s$ which implies that $\{\Phi_t\}_{t\geq 0}$ is a semi-group of mappings on $\mathfrak{M}_0$.
 
\end{proof}

Let us show that the $\Phi_t$ is a homeomorphism as a map on the space $\mathfrak{M}_0$.
We will prove the existence of inverse map $\Phi^{-1}.$ The idea is similar to the case of ordinary differential equations. Indeed,  we consider the following equations with interaction:

$$
\left\{\begin{array}{l}
d x(u, t)=a\left(x(u, t), \mu_{t}\right) d t \\
x(u, 0)=u, u \in \mathbb{R}^{d} \\
\mu_{t}=\mu_{0} \circ x(\cdot, t)^{-1}
\end{array}\right.
$$
and

$$
\left\{\begin{array}{l}
d \widetilde{x}(u, t)=-a\left(\widetilde{x}(v, t), \nu_{t}\right) d t \\
\tilde{x}(u, 0)=u, u \in \mathbb{R}^d \\
\nu_{t}=\nu_{0} \circ \tilde{x}(\cdot,  t)^{-1} .
\end{array}\right.
$$

The measure-valued solutions $\mu_{t}, \nu_{t}$ we denote as $\mu_{t}=\Phi_{t}\left(\mu_{0}\right), \quad \nu_{t}=\Psi_{t}\left(\nu_{0}\right).$
To demonstrate that $\Phi^{-1} = \Psi,$ we consider the following example of a linear equation with interaction, whose solution we can easily find.

\begin{example}
    
Let us consider a linear equation of the form

$$
\left\{\begin{array}{l}
d x(u, t)=\int_{\mathbb{R}^{d}} A(x(u, t)-v) \mu_{t}(d v) d t \\
x(u, 0)=u, \quad u \in \mathbb{R}^{d} \\
\mu_{t}=\mu \circ x(\cdot, t)^{-1}
\end{array}\right.
$$
Then the solution is given by
$$
x(u, t)=e^{A t} u+\left(I-e^{A t}\right) m,
$$
with $m=\int_{\mathbb{R}^{d}} v \mu(d v).$
The solution for the backward equation
$$
\left\{\begin{array}{l}
d y(u, t)=\int_{\mathbb{R}^{d}}(-A)(y(u, t)-v) \mu_{t}(d v) d t \\
y(u, T)=u, u \in \mathbb{R}^{d} \\
\mu_{t}=\mu \circ y(\cdot, t)^{-1}
\end{array}\right.
$$
is given by
$$
y(u, t)=e^{-A t} u+\left(I-e^{-A t}\right) m .
$$
Now,
$$
\begin{aligned}
& x(y(u, t), t)=e^{A t}\left(e^{-A t} u+\left(I-e^{-A t}\right) m\right)  +\left(I-e^{A t}\right) m= \\
= & u+\left(e^{A t}-I\right) m+\left(I-e^{A t}\right) m=u .
\end{aligned}
$$
Analogously, $y(x(u, t), t)=u$.

For measures we have
$$
\Phi_{t}(\mu)=\mu \circ x(\cdot, t)^{-1}, \quad \Psi_{t}(\mu)=\mu \circ y(\cdot, t)^{-1} .
$$
So, we have
$$
\Phi_{t}\left(\Psi_{t}(\mu)\right)=\left(\mu \circ y(\cdot, t)^{-1}\right) \circ x(
\cdot, t)^{-1}
$$
and  by the definition this means that for any subset $\Delta \in \mathbb{B}\left(\mathbb{R}^{d}\right)$,
$$
\begin{aligned}
& \Phi_{t}\left(\Psi_{t}(\mu)\right)(\Delta)=\left(\mu \circ y(\cdot, t)^{-1}\right)\{u: x(u, t) \in \Delta\}= \\
& =\mu\left\{v: y(v, t) \in x^{-1}(\Delta, t)\right\}=\mu \left\{ v: v \in y^{-1}\left(\cdot x^{-1}(\Delta, t)\right)\right\}= \\
& =\mu(t) .
\end{aligned}
$$
Analogously, $\Psi_{t}\left(\Phi_{t}(\mu)\right)=\mu$.
\end{example} 

The following lemma holds for a general case.

\begin{lemma}\label{phi-t-homeomorphism}
Let the coefficient $a$ be a Lipschitz function with respect to both variables. Then for any $\mu_{0}, t \geq 0$
$$
\Psi_{t}\left(\Phi_{t}\left(\mu_{0}\right)\right)=\mu_{0}, \quad \Phi_{t}\left(\Psi_{t}\left(\mu_{0}\right)\right)=\mu_{0}
$$   
\end{lemma}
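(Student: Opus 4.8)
The plan is to freeze the measure trajectory, thereby reducing both equations with interaction to genuine non-autonomous ODEs, and then read the inverse flow directly off the reverse equation. Fix $T>0$ and $\mu_0\in\mathfrak{M}_0$, and let $\{\mu_t\}_{t\in[0,T]}$ be the measure-valued solution with $\mu_t=\Phi_t(\mu_0)=\mu_0\circ x(\cdot,t)^{-1}$, where $x(\cdot,t)$ is the associated flow of homeomorphisms of $\mathbb{R}^d$ recalled in the Introduction. First I would note that $t\mapsto\mu_t$ is continuous (in $\gamma_0$): indeed $\int f\,d\mu_t=\int f(x(u,t))\,\mu_0(du)$ converges by dominated convergence for every bounded continuous $f$, using joint continuity of $(u,t)\mapsto x(u,t)$. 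Hence the vector field $b(x,t):=a(x,\mu_t)$ is continuous in $t$ and Lipschitz in $x$, so it generates a two-parameter flow $\{g_{s,t}\}_{0\le s,t\le T}$ of homeomorphisms of $\mathbb{R}^d$ with $g_{t,t}=\mathrm{id}$, $g_{s,t}\circ g_{r,s}=g_{r,t}$, $g_{0,t}=x(\cdot,t)$, and in particular $g_{T,0}=x(\cdot,T)^{-1}$.

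Next I would produce an explicit solution of the reverse equation with interaction started from $\nu_0:=\mu_T=\Phi_T(\mu_0)$, by setting
$$\tilde x(u,t):=g_{T,\,T-t}(u),\qquad \nu_t:=\mu_{T-t}\qquad(t\in[0,T]).$$
Then $\tilde x(u,0)=g_{T,T}(u)=u$; differentiating $\partial_s g_{T,s}(u)=a\big(g_{T,s}(u),\mu_s\big)$ at $s=T-t$ gives $\partial_t\tilde x(u,t)=-a\big(\tilde x(u,t),\mu_{T-t}\big)=-a\big(\tilde x(u,t),\nu_t\big)$; and from $g_{T,T-t}\circ g_{0,T}=g_{0,T-t}$ one obtains
$$\big(\tilde x(\cdot,t)\big)_*\nu_0=\big(g_{T,T-t}\big)_*\big(x(\cdot,T)\big)_*\mu_0=\big(g_{0,T-t}\big)_*\mu_0=\mu_{T-t}=\nu_t.$$
Thus $(\tilde x,\nu)$ solves the reverse equation with interaction, whose coefficient $-a$ is again Lipschitz, so by the uniqueness theorem quoted in the Introduction it is \emph{the} solution; therefore $\Psi_T(\mu_T)=\nu_T=\mu_0$, i.e. $\Psi_T(\Phi_T(\mu_0))=\mu_0$. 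Exchanging the roles of $a$ and $-a$ (equivalently of $\Phi$ and $\Psi$) gives $\Phi_T(\Psi_T(\mu_0))=\mu_0$ by the identical computation, and $T\ge 0$ was arbitrary.

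The one genuinely delicate point is this reduction: I must be certain that the measure-valued solution of the reverse equation launched from $\mu_T$ reproduces precisely the time-reversed path $\{\mu_{T-t}\}$ and not some other trajectory — and this is exactly where uniqueness for the equation with interaction does the work, since the pair $(\tilde x,\nu)$ above is merely a \emph{candidate} until uniqueness certifies it. A secondary technical point is the continuity and $t$-measurability of $s\mapsto\mu_s$, needed for the frozen-coefficient ODE to be well posed in the Carathéodory sense and for its flow to satisfy the composition identities used above; this is routine under the Lipschitz hypothesis, and in the explicitly solvable linear setting it can be checked by hand as in the preceding Example.
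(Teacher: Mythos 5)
Your proposal is correct, but it reaches the conclusion by a genuinely different route than the paper. The paper first restricts to discrete initial measures $\mu_0=\sum_{k=1}^N p_k\delta_{u_k}$ with fixed weights: there the equation with interaction collapses to an autonomous ODE system for the atoms, the classical fact that the forward and backward autonomous flows are mutually inverse gives $\Phi_t(\Psi_t(\mu))=\Psi_t(\Phi_t(\mu))=\mu$ on this class, and the general case is then obtained by approximating an arbitrary $\mu_0$ by discrete measures and invoking the stability estimate $\sup_{[0,T]}\gamma_0\left(\Phi_t(\mu'),\Phi_t(\mu'')\right)\le C\,\gamma_0(\mu',\mu'')$ (and its analogue for $\Psi_t$) quoted from the book. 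You instead freeze the measure path, view $x(\cdot,t)$ as the two-parameter flow $g_{0,t}$ of the non-autonomous Lipschitz field $b(x,t)=a(x,\mu_t)$, exhibit the explicit candidate $\bigl(\tilde x(u,t),\nu_t\bigr)=\bigl(g_{T,T-t}(u),\mu_{T-t}\bigr)$ for the reverse equation started from $\mu_T$, and let the uniqueness theorem for equations with interaction certify it; swapping $a$ and $-a$ gives the second identity. Your verification of the three defining conditions (initial condition, time derivative via $\partial_s g_{T,s}=a(g_{T,s},\mu_s)$, and the pushforward identity via $g_{T,T-t}\circ g_{0,T}=g_{0,T-t}$) is sound. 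What each approach buys: yours proves the stronger statement $\Psi_t(\Phi_T(\mu_0))=\mu_{T-t}$ for all $t\in[0,T]$ in one stroke, with no discrete approximation and no Wasserstein stability bound, at the cost of relying on the uniqueness theorem for the interaction equation, on the standard two-parameter flow theory for non-autonomous ODEs, and on the (routine, as you note, but needed in the metric $\gamma_0$ with respect to which $a$ is Lipschitz, e.g.\ via the coupling bound $\gamma_0(\mu_t,\mu_s)\le\int\|x(u,t)-x(u,s)\|\,\mu_0(du)$) continuity of $t\mapsto\mu_t$; the paper's route uses only the autonomous inverse-flow fact and machinery it has already set up, but must carry out the limit passage from discrete to general initial measures.
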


\begin{proof}
We will start considerations from the following family of discrete measures. 
    For fixed $\left\{p_{k}\right\}_{k=1}^{n},\ p_{k}>0,\ \sum_{k=1}^{N} p_{k}=1$, consider a family of measures

$$
\mathfrak M_0^{\vec p}=\left\{\mu_{0}=\sum_{k=1}^{N} p_{k} \delta_{u_{k}}, u_{k} \in \mathbb{R}^{d}, \ u_i\neq u_j, \ i\neq j \right\}.
$$
For initial measure $\mu=\sum_{k=1}^{N} p_{k} \delta_{u_{k}},$ the mass distribution at time $t$ is equal to $\mu_t=\sum_{k=1}^{N} p_{k} \delta_{x(u_{k},t)},$ so 
$$
\Phi_t\left(\mathfrak M_0^{\vec p}\right)\subset \mathfrak M_0^{\vec p}.
$$
From the form of $\mu_t$ we can see that the mapping $\Phi_t$ depends only on the position of heavy particles $x(u_k,t),$ $k=1,\ldots, n.$ The equation with interaction for heavy parties $y_{k}(t):=x\left(u_{k}, t\right)$ can be considered as a system of ordinary differential equations. Indeed, let us denote $\quad \alpha_{k}\left(y_{1}, \ldots, y_{k}\right)=a\left(y_{k}, \sum_{j=1}^{N} p_{j} \delta_{y_{j}}\right)$. Then

$$
\left\{\begin{array}{l}
d y_{k}(t)=\alpha_{k}\left(y_{1}(t), \ldots, y_{k}(t)\right) d t, \\
y_{k}(0)=u_{k}, \ k=1, \ldots, N_{0}.
\end{array}\right. 
$$
Similarly, $\quad z_{k}(t):=\tilde{x}\left(u_{k}, t\right) \quad$ satisfy following  system of differential equations

$$
\left\{\begin{array}{l}
d z_{k}(t)=-\alpha_{k}\left(z_{1}(t), \ldots, z_{k}(t)\right) d t, \\
z_{k}(0)=u_{k},\ k=1, \ldots, N_{0}.
\end{array}\right. 
$$
Note that $\alpha_k,$ $k=1,\ldots, n$ are Lipschitz functions.

Let us denote the solutions to these Cauchy problems by
$$
\varphi_{t}(\vec{u})=\left(y_{1}(t), \ldots, y_{k}(t)\right), \quad \psi_{t}(\vec{u})=\left(z_{1}(t), \ldots, z_{k}(t)\right).
$$
It is known that in this case
$$
\varphi_{t}\left(\psi_{t}(\bar{u})\right)=\vec{u} \text { and } \psi_{t}\left(\varphi_{t}(\bar{u})\right)=\vec{u}.
$$
To the positions of the heavy particles $\phi_t(\vec u)$ we put into the correspondence their mass distribution. More precisely, we define mapping
$$
\mathcal M_{\vec p}: \mathbb{R}^{N} \rightarrow \mathfrak M_0^{\vec p}$$
by the formula 
$$\mathcal M_{\vec p}\left(u_{1},\ldots,  u_{k}\right)=\sum_{k=1}^{N} p_{k} \delta_{u_{k}}.
$$
We also define inverse mapping 
$$
\mathcal M^{-1}_{\vec p}: \mathfrak M_0^{\vec p}\rightarrow  \mathbb{R}^{N}$$
by the formula: for $\mu=\sum_{k=1}^{N} p_{k} \delta_{u_{k}}$
$$\mathcal M^{-1}_{\vec p}\left(\mu\right)=(u_{1},\ldots,  u_{k}),
$$
where in the right-hand side the points with equal masses are ordered in the alphabetical order (denoted by $\preceq$), i.e. if $i_1<i_2<\ldots<i_k$ and
$\mu(u_{i_1}) = \mu(u_{i_2})=\ldots =\mu(u_{i_k})$ then 
$
u_{i_1}\preceq u_{i_2} \ldots \preceq u_{i_k}.
$
Using this notation we can write
 $$\quad \mu(t)=\Phi_{t}\left(\sum_{k=1}^{N} p_{k} \delta_{u_{k}}\right)=\mathcal M_{\vec p} \circ \varphi_{t} \circ \mathcal M_{\vec p}^{-1}\left(\sum_{k=1}^{N} p_{k} \delta_{u_{k}}\right)$$

$$
\nu(t)=\Psi_{t}\left(\sum_{k=1}^{N} p_{k} \delta_{u_{k}}\right)=\mathcal M_{\vec p} \circ \cdot \psi_{t}  \circ  \mathcal M_{\vec p}^{-1}\left(\sum_{k=1}^{N} p_{k} \delta_{u_{k}}\right)
$$
Or, shortly,
$$
\Phi_{t}= \mathcal M_{\vec p}\circ \varphi_{t} \circ \ \mathcal M_{\vec p}^{-1}, \ \ \Psi_{t}= \mathcal M_{\vec p} \circ \psi_{t} \circ  \mathcal M_{\vec p}^{-1}.
$$
Note that $\mathcal M_{\vec p}\circ \varphi_{t} \circ \ \mathcal M_{\vec p}^{-1}$ does not depend on choice of the order between points with equal masses. 
From this we can get property
$$
\Phi_{t}\left(\Psi_{t}(\mu)\right)=\Psi_{t}\left(\Phi_{t}(\mu)\right)=\mu.
$$
Indeed, for $\mu=\sum_{k=1}^{N} p_{k} \delta_{u_{k}}$

$$
\begin{aligned}
& \Phi_{t}\left(\Psi_{t}(\mu)\right)=\mathcal M_{\vec p}\circ  \varphi_{t} \circ \mathcal M_{\vec p}^{-1}\left(\mathcal M_{\vec p} \circ  \psi_{t} \cdot \mathcal M_{\vec p}^{-1}(\mu)\right)= \\
= & \mathcal M_{\vec p} \circ  \varphi_{t} \circ  \psi_{t}\circ  \mathcal M_{\vec p}^{-1}(\mu)=\mu .
\end{aligned}
$$
Now, for an arbitrary initial measure $\mu$ let us consider a sequence of measures
$$
\mu_{n}=\sum_{k=1}^{N_{n}} p_{k}^{(n)} \delta_{u_{k}^{(n)}}
$$
such that $\gamma_{0}\left(\mu_{n}, \mu\right) \rightarrow 0$ as $n \rightarrow \infty.$
It is known \cite{Andrey-book} that for the solutions $\mu_{t}^{\prime}, \mu_{t}^{\prime}$ to the differential equation with interaction with initial measures $\mu^{\prime}, \mu^{\prime \prime}$ the following inequality holds

$$
\sup _{[0,T]} \gamma_{0}\left(\mu_{t}^{\prime}, \mu_{t}^{\prime \prime}\right) \leq C \gamma_{0}\left(\mu^{\prime}, \mu^{\prime \prime}\right).
$$
Using notation of solution as $\Phi_{t}(\mu)=\mu_{t}$, we get the following continuity of the mapping $\Phi_{t}:$
$$
\sup _{[0, T]} \gamma_{0}\left(\Phi_{t}\left(\mu^{\prime}\right), \Phi_{t}\left(\mu^{\prime \prime}\right)\right) \leq C \gamma_{0}\left(\mu^{\prime}, \mu^{\prime \prime}\right).
$$
Analogously, the same property holds for the mapping $\Psi_{t}$. We will use this property in order to prove $\Phi_{t}\left(\Psi_{t}(\mu)\right)=\Psi_{t}\left(\Phi_{t}(\mu)\right)=\mu$ for arbitrary $\mu$. We already proved this property for $\mu_{n}$. Now, for arbitrary $\varepsilon>0$ we can find $\mu_{n}$ such that

$$
\begin{aligned}
& \sup _{[0, T]} \gamma_{0}\left(\Phi_{t}\left(\Psi_{t}(\mu)\right), \mu\right) \leq \sup _{[0, T]} \gamma_{0}\left(\Phi_{t}\left(\Psi_{t}(\mu)\right), \Phi_{t}\left(\Psi_{t}\left(\mu_{n}\right)\right)\right) \\
+ & \sup _{[0, T]} \gamma_{0}\left(\Phi_{t}\left(\Psi_{t}\left(\mu_{n}\right)\right), \mu_{n}\right)+\gamma_{0}\left(\mu_{n}, \mu\right)= \\
= & \sup _{[0, T]} \gamma_{0}\left(\Phi_{t}\left(\Psi_{t}(\mu)\right), \Phi_{t}\left(\Psi_{t}\left(\mu_{n}\right)\right)\right)+\gamma_{0}\left(\mu_{n,}, \mu\right)<\varepsilon.
\end{aligned}
$$
From this we conclude $\Phi_{t}\left(\Psi_{t}(\mu)\right)=\mu$.

\end{proof}

Note that the continuity of $\Phi$ follows from the known inequality that we used in the proof:
$$
\sup _{[0,T]} \gamma_{0}\left(\mu_{t}^{\prime}, \mu_{t}^{\prime \prime}\right) \leq C \gamma_{0}\left(\mu^{\prime}, \mu^{\prime \prime}\right).
$$

\section{Average of braid invariants as a functional of initial measure}
In the previous section we proved that the measure-valued solutions  $\{\Phi_t\}_{t\in \mathbb{R}}$ to the equation with interaction form a flow of continuous mappings on the space of probability measures $\mathfrak{ M_0}:$
$$
\Phi_0 = id, \ \Phi_t\circ\Phi_s = \Phi_{t+s}.
$$
Now we consider a braid that is formed by trajectories of particles driven by a differential equation with interaction: for $u_i \in \mathbb R^2,$ $i=1,\ldots, n,$
$$
\beta(\vec u, t): = (x(u_1,t), \ldots, x(u_n,t)), \ t\in [0, T].
$$
We will use the ergodic theorem to analyze limiting behaviour of topological invariants of a braid $\beta,$ (with $T\to \infty)$. As we mentioned in the previous section, the $n-$point motion $(x(u_1,t), \ldots, x(u_n,t)), \ t\in [0, T] $ does not satisfy evolutionary property, but the measure-valued solution $\mu_t, \ t\geq0$ forms a dynamical system in $\mathfrak{M}_0.$  This gives us an idea to represent the average of the invariants of a brad as a functional of the initial measure $\mu_t=\Phi_t(\mu_0)$ and then find the conditions under which it is possible to apply the ergodic theorem. 

Denote the linking number between trajectories which start from the points $u_{k_1}$ and $u_{k_2}$ as $\varphi(u_{k_1}, u_{k_2}, t)$.
Trivially, let $\varphi(u, u, t)=0$.
We can use the complex representation of linking number,
\begin{align*}
\varphi(u, v, t) = \frac{1}{2\pi i}\int^{t}_{0}\frac{{\rm d} z_2-z_1}{z_2-z_1},
\end{align*}
where $z_1 = x_1(u, t)+ix_2(u, t),\, z_2 = x_1(v, t)+ix_2(v, t)$.
From this we write the linking number as an integral with respect to time of some function $F$ from two trajectories:
\begin{align*}
\varphi(u, v, t) &= \frac{1}{2\pi i}\int^{t}_{0}\frac{a_1(x(v, s), \mu_s)- a_1(x(u, s), \mu_s) + i\left(a_2(x(v, s), \mu_s) - a_2(x(u,s), \mu_s)\right)}{z_2-z_1}{\rm d}s\\
&= : \int^{t}_{0} F(x(u,s), x(v,s), \mu_s){\rm d}s.
\end{align*}
Then the average linking number we can write as a functional of the initial measure $\mu_0.$ Indeed, 
\begin{align*}
  \int_{\mathbb{R}^2}\int_{\mathbb{R}^2}\varphi(u, v, t)\mu_0({\rm d}u)\mu_0({\rm d}v)
  = \int^{t}_{0}\int_{\mathbb{R}^2}\int_{\mathbb{R}^2}F(u, v, \mu_s)\mu_s({\rm d}u)\mu_s({\rm d}v){\rm d}s.
\end{align*}
Denoting by 
$$
G(\mu_s) := \int_{\mathbb{R}^2}\int_{\mathbb{R}^2}F(u, v, \mu_s)\mu_s({\rm d}u)\mu_s({\rm d}v),
$$
the average of the linking number we represent as
$$ 
  \int_{\mathbb{R}^2}\int_{\mathbb{R}^2}\varphi(u, v, t)\mu_0({\rm d}u)\mu_0({\rm d}v)=\int_0^t G(\Phi_s(\mu_0))ds.$$

Next, the general Vassiliev invariant of the braid is represented by a linear combination of integrals of $\varphi(u,v,t)$. So, the average Vassiliev invariant can be represented as a function from the initial measure $\mu_0.$

In the following sections, we will find conditions under which we can apply the ergodic theorem

\section{Liouville's Theorem for deterministic differential equations with interaction}
In this section, we will find the condition on the coefficient of the differential equation when the solution to the equation preserves a volume. We will start from the case of usual solution $x(\cdot, t)$ and will show when it preserves Lebesgue measure on $\mathbb R^2.$ 
The Liouville's theorem in Hamilton's equations asserts that the phase flow preserves phase volume\cite{Arnold-book}.
Extensions of Liouville's theorem to stochastic systems can be found in \cite{Ryogo-stoch., Kunita}.
In this section, we extend Liouville's theorem to deterministic differential equations with interaction.

\subsection{The classical Liouville's theorem}
\label{L-T}

In this subsection, we investigate the deterministic differential equations with interaction (\cite{Andrey-1-2003}-\cite{Andrey-book}) in $\mathbb{R}^2$:
\begin{equation}
\begin{cases}
{\rm d}x(u,t)=a(x(u,t),\mu_t){\rm d}t,\\	
x(u,0)=u,\,\,\,\, u\in \mathbb{R}^2,\\
\mu_t=\mu_0\circ x(\cdot,t)^{-1}.
\end{cases}
\end{equation}
Measure $\mu_0$ on $\mathbb{R}^2$ is an initial distribution of mass of particles in media.
Coefficient $a(x,\mu)$ depends on spatial variable and measure which describes the mass of particles in space.
A trajectory of particle with starting point $u\in \mathbb{R}^2$ is described by $\{x(u,t),\ t\geq0\}.$
The measure $\mu_t$ is the distribution of the mass of particles at time $t$.
It is proved that if coefficient of the equation \eqref{eq-interaction} satisfies Lipschitz condition with respect to spatial and measure-valued variables,
then there exists a unique solution to \eqref{eq-interaction} (see P78 in \cite{Andrey-book}),
and the map $x(\cdot, t): \mathbb{R}^2\rightarrow \mathbb{R}^2$ is a homeomorphism (see P87 in \cite{Andrey-book}).

Let $x'_{u},\,a'_{x}(x, \mu)$ be the Jacobian Matrices, i.e.
\begin{equation*}
  x'_{u} =
  \left(
                  \begin{array}{cc}
                    \tfrac{\partial x_1}{\partial u_1} & \tfrac{\partial x_1}{\partial u_2} \\
                    \tfrac{\partial x_2}{\partial u_1} & \tfrac{\partial x_2}{\partial u_2}  \\
                  \end{array}
                \right) (u, t), \quad
a'_{x}(x, \mu) =
\left(
                  \begin{array}{cccc}
                    \tfrac{\partial a_1}{\partial x_1} & \tfrac{\partial a_1}{\partial x_2} \\
                    \tfrac{\partial a_2}{\partial x_1} & \tfrac{\partial a_2}{\partial x_2}  \\
                  \end{array}
                \right)(x(u, t), \mu_t),
\end{equation*}
$|x'_u|$ denotes the determinant of $x'_u$
and $\trace a'_x(x, \mu)$ denotes the trace of $a'_x(x, \mu)$.
It is easy to check that the following statement holds.

\begin{lemma}
\label{lemma1}
Put
$$x'_{u,1} = (\tfrac{\partial^2 x_1}{\partial u\partial t}, \tfrac{\partial x_2}{\partial u})^{\mathrm{T}}, \quad
x'_{u,2} = (\tfrac{\partial x_1}{\partial u}, \tfrac{\partial^2 x_2}{\partial u\partial t})^{\mathrm{T}}.$$
Then
$$\frac{\rm d}{{\rm d}t}|x'_u|= |x'_{u,1}|+ |x'_{u,2}|.$$
\end{lemma}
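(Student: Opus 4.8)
The plan is to reduce the statement to the multilinearity (row-wise Leibniz rule) of the $2\times 2$ determinant combined with the elementary product rule. First I would write the determinant out explicitly as a function of $(u,t)$,
$$|x'_u| = \frac{\partial x_1}{\partial u_1}\,\frac{\partial x_2}{\partial u_2} - \frac{\partial x_1}{\partial u_2}\,\frac{\partial x_2}{\partial u_1},$$
and differentiate in $t$ term by term. Since the statement presupposes that $x(\cdot,t)$ is $C^1$ in $u$ with the Jacobian $x'_u$ depending differentiably on $t$, Schwarz's theorem lets me interchange $\partial_t$ and $\partial_{u_j}$, so that $\partial_t\big(\partial x_i/\partial u_j\big) = \partial^2 x_i/\partial u_j\partial t$; equivalently, the $t$-derivative of the first (resp. second) row of $x'_u$ is the $u$-gradient of $\partial_t x_1$ (resp. $\partial_t x_2$).

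Applying the product rule to the two products above yields four terms, which I group into two pairs. The pair in which the $t$-derivative falls on the factors coming from $x_1$ is
$$\frac{\partial^2 x_1}{\partial u_1\partial t}\,\frac{\partial x_2}{\partial u_2} - \frac{\partial^2 x_1}{\partial u_2\partial t}\,\frac{\partial x_2}{\partial u_1} = |x'_{u,1}|,$$
which is exactly the determinant of the matrix obtained from $x'_u$ by replacing its first row with that row's $t$-derivative. The remaining pair, where the $t$-derivative falls on the factors coming from $x_2$, is similarly $|x'_{u,2}|$. Summing the two gives $\frac{\mathrm d}{\mathrm dt}|x'_u| = |x'_{u,1}| + |x'_{u,2}|$, as claimed. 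This is just the $2\times 2$ case of the general formula $\frac{\mathrm d}{\mathrm dt}\det M(t) = \sum_i \det M^{(i)}(t)$, where $M^{(i)}$ has its $i$-th row differentiated.

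The computation itself is a one-liner, so the only point demanding care is the underlying regularity: to speak of $x'_u$ and of $\frac{\mathrm d}{\mathrm dt}|x'_u|$ at all, one needs $x(\cdot,t)$ to be continuously differentiable in $u$ with the derivative jointly $C^1$ in $(u,t)$, which is not guaranteed by the Lipschitz hypothesis alone but requires additional smoothness of the coefficient $a$ in $x$. I would therefore carry out the argument under that standing smoothness assumption (sufficient for the mixed-partial interchange via Schwarz's theorem), and note that once these derivatives are known to exist the identity is purely algebraic. Thus the genuine obstacle is not the manipulation but justifying differentiation under the determinant sign and the symmetry of second derivatives within the class of solutions considered.
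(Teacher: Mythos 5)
Your proof is correct and is precisely the standard row-wise Leibniz computation for the $2\times 2$ determinant that the paper itself leaves implicit (it only remarks that the lemma "is easy to check" and gives no proof). Your regularity caveat is reasonable and consistent with how the paper later obtains the mixed partials, namely by differentiating the integral equation $x_i(u,t)=u_i+\int_0^t a_i(x(u,s),\mu_s)\,{\rm d}s$ in $u_j$ and then in $t$, so the quantities $\tfrac{\partial^2 x_i}{\partial u_j\partial t}$ are defined directly without needing Schwarz's theorem.
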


\begin{corollary}
\label{Liouville-formula}
(Ostrogradsky-Liouville formula)
\begin{equation}\label{L-formular}
|x'_u| = \exp \int_0^t \trace a'_x(x(u, s), \mu_s){\rm d}s.
\end{equation}
\end{corollary}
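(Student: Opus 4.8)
The plan is to turn $|x'_u|$ into the solution of a scalar linear ODE in $t$ by means of Lemma~\ref{lemma1}, and then integrate. The starting point is the variational equation. Since $\partial_t x(u,t)=a(x(u,t),\mu_t)$ and — this is the point where equations with interaction differ only cosmetically from ordinary ODEs — the measure $\mu_t=\mu_0\circ x(\cdot,t)^{-1}$ carries no dependence on the label $u$, differentiating in $u_j$ and interchanging $\partial_{u_j}$ with $\partial_t$ gives
$$
\frac{\partial^2 x_i}{\partial u_j\,\partial t}(u,t)=\sum_{k=1}^{2}\frac{\partial a_i}{\partial x_k}(x(u,t),\mu_t)\,\frac{\partial x_k}{\partial u_j}(u,t),\qquad i,j\in\{1,2\}.
$$
Denoting by $r_1,r_2$ the rows of $x'_u$, this says $\partial_t r_i=\sum_k \frac{\partial a_i}{\partial x_k}\,r_k$.

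Next I would feed this into Lemma~\ref{lemma1}. The matrix $x'_{u,1}$ has rows $(\partial_t r_1,\,r_2)$; expanding by multilinearity in the first row, the term coming from $\tfrac{\partial a_1}{\partial x_2}r_2$ produces a matrix with two equal rows and therefore vanishes, so $|x'_{u,1}|=\frac{\partial a_1}{\partial x_1}(x(u,t),\mu_t)\,|x'_u|$. Symmetrically $|x'_{u,2}|=\frac{\partial a_2}{\partial x_2}(x(u,t),\mu_t)\,|x'_u|$. Hence Lemma~\ref{lemma1} yields
$$
\frac{d}{dt}|x'_u|=\Bigl(\frac{\partial a_1}{\partial x_1}+\frac{\partial a_2}{\partial x_2}\Bigr)(x(u,t),\mu_t)\,|x'_u|=\trace a'_x(x(u,t),\mu_t)\,|x'_u|.
$$
This is a scalar linear ODE for $t\mapsto|x'_u|(u,t)$ with initial value $|x'_u|(u,0)=1$, because $x(u,0)=u$ forces the Jacobian at $t=0$ to be the identity. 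Integrating gives $|x'_u|=\exp\int_0^t\trace a'_x(x(u,s),\mu_s)\,ds$, which is \eqref{L-formular}.

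The main obstacle is not the algebra but the regularity bookkeeping behind the variational equation: one needs $u\mapsto x(u,t)$ to be $C^1$, the mixed partials $\partial^2 x_i/\partial u_j\partial t$ to exist with the order of differentiation interchangeable, and $a$ to be $C^1$ in the spatial variable so that $a'_x$ and the chain rule above are legitimate. Under the standing smoothness hypotheses (Lipschitz dependence on both arguments, together with $C^1$ dependence of $a$ on $x$, which is implicit since $a'_x$ must exist for the statement to make sense), these follow from the classical differentiable-dependence-on-initial-conditions theory applied with $\mu_t$ treated as a prescribed, $u$-independent time-dependent coefficient. The only genuinely new wrinkle compared with the textbook Liouville argument is verifying that this treatment is valid, i.e. that $\mu_t$ indeed introduces no dependence on the label $u$, which is exactly the content of the push-forward formula $\mu_t=\mu_0\circ x(\cdot,t)^{-1}$.
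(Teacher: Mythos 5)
Your argument is correct and follows essentially the same route as the paper: derive the variational equation $\partial_t\partial_{u_j}x_i=\sum_l \partial_{x_l}a_i\,\partial_{u_j}x_l$ (the paper gets it by differentiating the integral form of the equation, you by interchanging mixed partials — same content), feed it into Lemma~\ref{lemma1} so that each of $|x'_{u,1}|,|x'_{u,2}|$ collapses by multilinearity to a diagonal entry of $a'_x$ times $|x'_u|$, and integrate the resulting scalar linear ODE with $|x'_u|=1$ at $t=0$. Your explicit remark that $\mu_t$ enters only as a $u$-independent time-dependent coefficient is a useful clarification of what the paper leaves implicit, but it does not change the proof.
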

\begin{proof}
By \eqref{eq-interaction},
\begin{equation*}
  x_i(u, t) = u_i + \int^t_0 a_i(x(u, s), \mu_s){\rm d}s,
  \quad i=1, 2.
\end{equation*}
Then
\begin{align*}
  \tfrac{\partial x_i}{\partial u_j}(u, t)
  = \delta_{ij}
  + \int^t_0 \sum_{l=1}^{2}\tfrac{\partial a_i}{\partial x_l}(x(u, s), \mu_s)\tfrac{\partial x_l}{\partial u_j}(u, s){\rm d}s,
  \quad i,j = 1,2,
\end{align*}
where $\delta_{ij}$ is a Kronecker function.
Thus,
\begin{align*}
  \tfrac{\partial^2 x_i}{\partial u_j\partial t}(u, t)
  =  \sum_{l=1}^{2}\tfrac{\partial a_i}{\partial x_l}(x(u, t), \mu_t)\tfrac{\partial x_l}{\partial u_j}(u, t),
  \quad i,j=1, 2.
\end{align*}
Then by Lemma \ref{lemma1},
\begin{align*}
  \frac{\rm d}{{\rm d}t}|x'_u|
  &= |x'_u| \tfrac{\partial a_1(x(u, t), \mu_t)}{\partial x_1} +|x'_u|\tfrac{\partial a_2(x(u, t), \mu_t)}{\partial x_2}\\
  &= \trace a'_x\left(x(u, t), \mu_t\right)|x'_u|.
\end{align*}
Since $ |x'_u| = 1$ at $t=0$, then
$$|x'_u| = \exp \int_0^t \trace a'_x(x(u, s), \mu_s){\rm d}s.$$
\end{proof}

The Ostrogradsky-Liouville formula is applied to find sufficient conditions under which the solution to equations with interaction \eqref{eq-interaction} preserves the measure on $\mathbb{R}^2$.

\begin{theorem}
\label{Liouville-theorem} (Liouville Theorem)
Let $m({\rm d} u) = p(u){\rm d}u $ be a measure on $\mathbb{R}^2$.
Suppose that $p\in C^1(\mathbb{R}^2)$ is a bounded and nonnegative function, then solutions to \eqref{eq-interaction} preserve the measure $m$ iff
$$\dive ( p(u)a(u, \mu_t))= 0, \,\,\forall\, t\geq 0, \, u \in \mathbb{R}^2.$$
\end{theorem}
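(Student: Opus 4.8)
The plan is to convert the statement ``$m$ is preserved'' into a pointwise identity along each trajectory via the change-of-variables formula, and then to differentiate that identity in $t$ using the Ostrogradsky--Liouville formula of Corollary~\ref{Liouville-formula}.

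First I would pin down the meaning of the conclusion: preservation of $m$ means $m\circ x(\cdot,t)^{-1}=m$ for all $t\ge 0$, i.e. $\int_{x(\cdot,t)^{-1}(A)}p(u)\,\mathrm du=\int_A p(v)\,\mathrm dv$ for every Borel $A\subset\mathbb R^2$. Working under the (implicit) regularity assumption of this section that $a(\cdot,\mu)$ is smooth enough for $x(\cdot,t)$ to be a $C^1$-diffeomorphism of $\mathbb R^2$ — so that Corollary~\ref{Liouville-formula} applies and $|x'_u|=\exp\int_0^t\trace a'_x\,\mathrm ds>0$ — the substitution $v=x(u,t)$ rewrites the right-hand side as $\int_{x(\cdot,t)^{-1}(A)}p(x(u,t))\,|x'_u(u,t)|\,\mathrm du$. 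Since $A$ is arbitrary and both integrands are continuous in $u$, $m$ is preserved if and only if
\begin{equation*}
p(x(u,t))\,|x'_u(u,t)|=p(u),\qquad \forall\,t\ge 0,\ u\in\mathbb R^2 .
\end{equation*}

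Next I would fix $u$ and differentiate $g(t):=p(x(u,t))\,|x'_u(u,t)|$ in $t$. Using $\partial_t x(u,t)=a(x(u,t),\mu_t)$, the chain rule, $\tfrac{\mathrm d}{\mathrm dt}|x'_u|=\trace a'_x(x(u,t),\mu_t)\,|x'_u|$ from Corollary~\ref{Liouville-formula}, and $\trace a'_x=\dive a$, one obtains
\begin{equation*}
g'(t)=\big[\nabla p\cdot a(\cdot,\mu_t)+p\,\dive a(\cdot,\mu_t)\big]\big(x(u,t)\big)\,|x'_u(u,t)|=\big[\dive\big(p\,a(\cdot,\mu_t)\big)\big]\big(x(u,t)\big)\,|x'_u(u,t)|,
\end{equation*}
the divergence being taken in the spatial variable with $\mu_t$ frozen. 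Since $|x'_u(u,t)|>0$ and $g(0)=p(u)$, both implications follow: if $\dive(p\,a(\cdot,\mu_t))\equiv 0$ on $\mathbb R^2$ for every $t$ then $g'\equiv0$, so $g\equiv p(u)$ and $m$ is preserved by the first step; conversely, if $m$ is preserved then $g\equiv p(u)$, hence $g'\equiv0$, hence $[\dive(p\,a(\cdot,\mu_t))](x(u,t))=0$ for all $t,u$, and since $x(\cdot,t)$ is a homeomorphism of $\mathbb R^2$ (hence onto) the point $x(u,t)$ ranges over all of $\mathbb R^2$, yielding $\dive(p\,a(\cdot,\mu_t))\equiv 0$ for every $t$.

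The only genuinely delicate points are bookkeeping ones. The statement as quoted assumes only $p\in C^1$; to run the change of variables and to invoke Corollary~\ref{Liouville-formula} one also needs $x(\cdot,t)$ to be a $C^1$-diffeomorphism, i.e. $a(\cdot,\mu)\in C^1$ (uniformly enough in $\mu$), so I would make this hypothesis explicit. Passing from ``a.e.'' to ``everywhere'' in the displayed identity is immediate from continuity of both sides, and the surjectivity of $x(\cdot,t)$ used in the ``only if'' direction is precisely the homeomorphism property recalled in the Introduction. Everything else — differentiation of $g$, positivity of the Jacobian — is routine once Corollary~\ref{Liouville-formula} is available.
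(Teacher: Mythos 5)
Your argument is correct and rests on the same engine as the paper's proof, namely Corollary~\ref{Liouville-formula}, but it is organized differently in two respects. First, you reduce measure preservation to the pointwise transport identity $p(x(u,t))\,|x'_u(u,t)|=p(u)$ and differentiate that scalar function of $t$, whereas the paper keeps the set-level quantity $V(t)=m(x(A,t))$ for an arbitrary bounded Borel set $A$ and differentiates under the integral sign (which forces it to justify the interchange via dominated convergence); your pointwise reduction is legitimate because $x(\cdot,t)$ is a homeomorphism, so equality of the integrals over all Borel sets plus continuity of both integrands gives the identity everywhere. Second, and more substantively, you handle arbitrary times differently: the paper evaluates the derivative only at $t=0$, where the Jacobian factor disappears and $V'(0)=\int_A\dive\bigl(p(u)a(u,\mu_0)\bigr)\,\mathrm du$, and then reaches general $t_0$ by restarting the flow through the auxiliary equation \eqref{new-flow} with initial measure $\mu_{t_0}$ and the relation $x(u,t_0+t)=y(x(u,t_0),t)$; you instead differentiate directly at an arbitrary time, obtain $g'(t)=\bigl[\dive\bigl(p\,a(\cdot,\mu_t)\bigr)\bigr](x(u,t))\,|x'_u(u,t)|$, and use positivity of the Jacobian together with surjectivity of $x(\cdot,t)$ to convert the vanishing along the flow into vanishing on all of $\mathbb R^2$. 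The two devices (flow restart versus surjectivity of the time-$t$ map) play exactly the same role; your version is slightly more economical since it does not invoke the evolution property of the measure-valued flow, while the paper's version stays closer to the classical Liouville argument and avoids the pointwise a.e.-to-everywhere step at the level of densities. Your explicit flagging of the extra regularity needed on $a(\cdot,\mu)$ (so that $x(\cdot,t)$ is a $C^1$-diffeomorphism and Corollary~\ref{Liouville-formula} applies) is appropriate, as the paper uses the same hypotheses implicitly throughout the section.
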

\begin{proof}
By the homeomorphic property of the solution to \eqref{eq-interaction}, we know that $x(A, t)$ is measurable for a bounded Borel set
$A\subseteq \mathbb{R}^2$.
Define
$$V(t) = m(x(A, t)),\,\,t\geq 0,$$
where $x(A, t) = \{x(u,t), \, u\in A\}$.
Due to the continuity of $x(u, t)$ respect to $u$ and differentiability of $x(u, t)$ respect to $t$, we know that $V\in C^1(\mathbb{R}_+)$.
Compute $V(t)$ using the change of variables and Corollary \ref{Liouville-formula},
\begin{align*}
  V(t) &= \int_{x(A, t)}p(u){\rm d}u =\int_{A}p(x(u, t))|x'_u(u, t)|{\rm d}u\\
  &= \int_{A}p(x(u, t))\exp \int_0^t \trace a'_x(x(u, s), \mu_s){\rm d}s{\rm d}u.
\end{align*}

Now we compute the derivative of $V$. We first remark that the function $F(u,t)=p(x(u, t))\exp \int_0^t \trace a'_x(x(u, s), \mu_s){\rm d}s$, $u \in A$, $t\geq 0$, is differentiable in $t$ for every $u$ and its derivative $\frac{\partial F}{\partial t}(u,t)$ is bounded for $u \in A$ and $t$ from any interval $[0,T]$. By the dominated convergence theorem and the Lagrange theorem, it is easy to see that the function $V$ is differentiable and
\begin{align*}
  V'(t)= &\int_A  p'(x(u, t)) \tfrac{\partial x(u,t)}{\partial t} \exp \int_0^t \trace a'_x(x(u, s), \mu_s){\rm d}s{\rm d}u\\
         & + \int_{A}p(x(u, t))\trace a'_x(x(u, t), \mu_t)\exp \int_0^t \trace a'_x(x(u, s), \mu_s){\rm d}s{\rm d}u\\
       =&\int_A  p'(x(u, t)) a(x(u,t), \mu_t) \exp \int_0^t \trace a'_x(x(u, s), \mu_s){\rm d}s{\rm d}u\\
         & + \int_{A}p(x(u, t))\trace a'_x(x(u, t), \mu_t)\exp \int_0^t \trace a'_x(x(u, s), \mu_s){\rm d}s{\rm d}u.
\end{align*}
Thus,
\begin{align*}
  V'(0) &=\int_A\big[ p'(u)a(u, \mu_0) + p(u)\trace a'_u(u, \mu_0)\big]{\rm d} u \\
  & =\int_A \big[ \sum^2 _{i=1}\tfrac{\partial p(u)}{\partial u_i}a_i(u, \mu_0) + p(u)\sum^2_{i =1}\tfrac{\partial a_i}{\partial u_i}(u, \mu_0) \big]{\rm d} u\\
  & = \int_A \sum^{2}_{i=1}\tfrac{\partial}{\partial u_i}(p(u)a_i(u, \mu_0)){\rm d} u\\
  &= \int_A \dive(p(u)a(u,\mu_0)){\rm d} u.
\end{align*}
Hence, $V'(0)=0$ for every bounded Borel set $A\subset \mathbb{R}^2$ which implies that
$$\dive(p(u)a(u,\mu_0))=0\quad u-\mbox{a.e.}$$
Note that $\dive(p(u)a(u,\mu_0))$ is continuous, therefore
$$\dive(p(u)a(u,\mu_0))=0,
\quad \forall u \in \mathbb{R}^2.$$

For fixed $t_0>0$ we define a new flow as follows
\begin{equation}\label{new-flow}
\begin{cases}
{\rm d}y(u,t)=a(y(u,t),\nu_t){\rm d}t,\\	
y(u,0)=u,\ u\in \mathbb{R}^2,\\
\nu_t=\mu_{t_0}\circ y(\cdot,t)^{-1}.
\end{cases}
\end{equation}
Thus,
\begin{equation}\label{flow-relation}
  x(u, t_0+t) = y(x(u, t_0), t), \,\,\,\, \mu_{t_0+t} = \nu_t.
\end{equation}
By \eqref{flow-relation}, we have
$$V_{t_0}(t) = m(x(A, t_0+t)) = m(y(x(A, t_0), t)).$$
Similarly, we have
$V'(t_0) = V'_{t_0}(0) =0$ for every bounded Borel set $A\subset \mathbb{R}^2$ iff
$$\dive(p(u)a(u, \nu_0))=\dive(p(u)a(u,\mu_{t_0})) =0,\quad u \in \mathbb{R}^2.$$
Therefore, $V'(t)=0$ for every bounded Borel set $A\subset \mathbb{R}^2$ iff
$$\dive ( p(u)a(u, \mu_t))= 0, \,\, \forall\, t\geq 0,\,u\in\mathbb{R}^2.$$
This completes the proof.
\end{proof}

By Theorem \ref{Liouville-theorem}, one can easily get a sufficient condition guaranteed that solution to \eqref{eq-interaction} preserves the measure.

\begin{corollary}
Let $m({\rm d} u) = p(u){\rm d}u $ be a measure on $\mathbb{R}^2$.
Suppose that $p\in C^1(\mathbb{R}^2)$ is a bounded and nonnegative function, and
$$\dive ( p(u)a(u, \mu))= 0$$
for every probability measure $\mu$ on $\mathbb{R}^2$, all $t\geq 0$ and $u \in \mathbb{R}^2$.
Then the solution to \eqref{eq-interaction} preserves the measure $m$.
\end{corollary}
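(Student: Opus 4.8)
The plan is to read off the claim directly from Theorem~\ref{Liouville-theorem}, since the hypothesis here is a uniform (measure-independent) strengthening of the condition appearing there. First I would recall that for any initial probability measure $\mu_0\in\mathfrak M_0$ the measure-valued solution $\mu_t=\Phi_t(\mu_0)=\mu_0\circ x(\cdot,t)^{-1}$ is again a probability measure on $\mathbb R^2$ for every $t\ge 0$: this is simply because $x(\cdot,t)$ is a (measurable) map of $\mathbb R^2$ onto itself, so push-forward preserves total mass, and nonnegativity is obvious. Thus the family $\{\mu_t\}_{t\ge 0}$ is a particular curve inside the set of probability measures over which the hypothesis is assumed.

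The key step is then the substitution $\mu=\mu_t$. By assumption, $\dive\bigl(p(u)a(u,\mu)\bigr)=0$ for \emph{every} probability measure $\mu$ on $\mathbb R^2$ and every $u\in\mathbb R^2$; specializing $\mu$ to the solution's mass distribution $\mu_t$ at each fixed $t$ yields
$$
\dive\bigl(p(u)a(u,\mu_t)\bigr)=0,\qquad \forall\,t\ge 0,\ u\in\mathbb R^2.
$$
This is precisely the condition whose sufficiency is the ``if'' direction of Theorem~\ref{Liouville-theorem}. Invoking that direction of the theorem immediately gives that the solution $x(\cdot,t)$ to~\eqref{eq-interaction} preserves the measure $m(\mathrm du)=p(u)\,\mathrm du$, which is the assertion.

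I do not anticipate a genuine obstacle: the corollary is an immediate specialization, and the only (minor) point worth spelling out is that the relevant $\mu_t$ is indeed admissible as an input to the standing hypothesis, i.e. that it is a bona fide probability measure — which follows from the homeomorphism property of $x(\cdot,t)$ recalled in Section~2. If one wished to state the result for a general starting time, one could additionally note, exactly as in the proof of Theorem~\ref{Liouville-theorem}, that the shifted flow~\eqref{new-flow} has initial measure $\mu_{t_0}$, again a probability measure, so the same argument applies verbatim; but for the statement as given nothing beyond the single substitution is needed.
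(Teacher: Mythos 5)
Your proof is correct and matches the paper's (implicit) argument: the paper states this corollary as an immediate consequence of Theorem \ref{Liouville-theorem}, and your specialization of the hypothesis to $\mu=\mu_t$ — noting that each $\mu_t$ is indeed a probability measure — is exactly the intended reasoning.
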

\begin{example}
 Let us consider the following equation:
$$
\left\{\begin{array}{l}
d x(u, t)=g\left(\|x(u, t)\|, \mu_{t}\right) A x(u, t) d t \\
x(u, 0)=u, \quad u \in \mathbb{R}^{2} \\
\mu_{t}=\mu_{0} \circ x(, t)^{-1}
\end{array}\right.
$$
where the function $g: \mathbb{R}_+ \times \mathfrak M_0 \rightarrow \mathbb{R}$ satisfies Lipschitz condition, $g(0)=0$, and $$A=\left(\begin{array}{ll}0 & 1 \\ -1 & 0\end{array}\right).$$
Assume that $\mu_{0}$ has density: $\mu_{0}(d u)=p(u)d u$.
Let the density $p$ be rotation-invariant. Then the measure $\mu_{0}$ is invariant, that is $\mu_{t}=\mu_{0}$ for all $t \geqslant 0.$
To check this we use Theorem \ref{Liouville-theorem} and prove that $\operatorname{div}(p(u) a(u, \mu))=0$ for all $\mu$, where 
$$
a\left(u,\mu\right)=g\left(u_{1}^{2}+u_{2}^{2}, \mu\right)\left(\begin{array}{l}
u_{2} \\
-u_{1}
\end{array}\right) \text {. }
$$
Then
$$
p(u) a\left(u, \mu\right)=\left(\begin{array}{c}
u_{2} \cdot g\left(u_{1}^{2}+u_{2}^{2}, \mu\right) \cdot p\left(u_{1}, u_{2}\right) \\
-u_{1}\cdot g\left(u_{1}^{2}+u_{2}^{2}, \mu\right) \cdot p\left(u_{1}, u_{2}\right)
\end{array}\right)
$$
and the derivative is equal to zero
$$
\begin{aligned}
& \operatorname{div}(p(u) a(u, \mu))= \\
& =u_{2} \cdot g_{2}^{\prime}\left(u_{1}^{2}+u_{2}^{2}, \mu\right) \cdot 2 u_{1} p\left(u_{1}, u_{2}\right)+p_{1}^{\prime}\left(u_{1}, u_{2}\right) u_{2} g\left(u_{1}^{2}+u_{2}^{2}, \mu\right) \\
& -u_{1} g_{1}^{\prime}\left(u_{1}^{2}+u_{2}^{2}, \mu\right) \cdot 2 u_{2} p\left(u_{1}, u_{2}\right)-p_{2}^{\prime}\left(u_{1}, u_{2}\right) \cdot u_{1} g\left(u_{1}^{2}+u_{2}^{2}, \mu\right)= \\
& =g\left(u_{1}^{2}+u_{2}^{2}, \mu\right)\left(p_{1}^{\prime}\left(u_{1}, u_{2}\right) \cdot u_{2}-p_{2}^{\prime}\left(u_{1}, u_{2}\right) u_{1}\right)=0,
\end{aligned}
$$
in the case when
$$
p_{1}^{\prime}\left(v_{1}, u_{2}\right) \cdot u_{2}-p_{2}^{\prime}\left(v_{1}, u_{2}\right) u_{1}=0 .
$$
This property is true if $p$ is rotation invariant, that is $p\left(u_{1}, u_{2}\right)=\varphi\left(u_{1}^{2}+u_{2}^{2}\right)$.

\end{example}

\subsection{Extended Liouville's theorem}
In this subsection we will prove an analog of Liouville's theorem for measure-valued flow of solutions $\Phi_t.$  Here we consider a space of finite discrete measures defined as
$$\mathfrak{K}_n = \left\{\sum_{k=1}^{n}p_k\delta_{u_k}:\ p_k>0, \sum_{k=1}^np_k=1, u_k \in \mathbb{R}^2\, \mbox{and}\
u_k\neq u_j,\  k\neq j \right\},\,\, n=1, 2,\cdots.$$
Trivially, $\mathfrak{K}_n\cap  \mathfrak{K}_m=\emptyset$ for $n \not= m$.
For any set $A \subset \mathbb{R}^2$ and $\mu=\sum_{k=1}^{n}p_k\delta_{u_k} \in \mathfrak{K}_n$, we have
\begin{align*}
  \Phi_t(\mu)(A)= \sum_{k=1}^n p_k 1_{x_{0, \mu}(A, t)^{-1}}(u_k)
  =\sum_{k=1}^n p_k 1_{A}(x_{0, \mu}(u_k, t)).
\end{align*}
Thus,
\begin{equation}\label{eq-map_phi}
\Phi_t(\mu) = \sum_{k=1}^n p_k \delta_{x_{0, \mu}(u_k, t)},
\end{equation}
which implies that $\Phi_t(\mu) \in \mathfrak{K}_n$ for all $\mu \in \mathfrak{K}_n$.

For
$(u_1, \cdots, u_n)\in (\mathbb{R}^2)^n$, $u_k \neq u_j,\,k\neq j$,
let
\begin{equation}
\label{new-measure}
{\rm d}m_n = p(u_1,\cdots,u_n){\rm d}u_1\cdots {\rm d}u_n
\end{equation}
be a measure on $(\mathbb{R}^2)^n$,
where $p \in C^1(\mathbb{R}^{2n})$ is a bounded nonnegative function.
For fixed weights $p_k>0,\,1\leq k\leq n$ and $\sum_{k=1}^np_k=1$, we define a measure $\mathfrak{m}_n$ on $\mathfrak{K}_n$ as the image of $m_n$ under the map
\[
  (u_1,\dots,u_n)\mapsto \sum_{ k=1 }^{ n } p_k \delta_{u_k}.
\]
One can interpret the measure $\mathfrak{m}_n$ as a distribution in $\mathfrak{K}_n$ of a random point measure $\sum_{ k=1 }^{ n } p_k \delta_{\xi_k}$, where $\{\xi_k\}_{k=1}^n$ are random variables with joint distribution $m_n.$

Similarly to the proof of Theorem \ref{Liouville-theorem}, we get the following theorem.

\begin{theorem}
\label{Liou-th-for-k_n}
Let $\mathfrak{m}_n$ be defined as above.
Assume that for all $u_1, \cdots, u_n \in \mathbb{R}^2,\,\mu \in \mathfrak{M}_0$,
$$\dive_k ( p(u_1,\cdots,u_n)a(u_k, \mu))= 0,\, 1\leq k \leq n,$$
then $\Phi_t$ preserves the measure $\mathfrak{m}_n$.
\end{theorem}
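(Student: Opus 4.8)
The plan is to reduce the statement to the classical finite--dimensional Liouville theorem by conjugating the restriction of $\Phi_t$ to $\mathfrak K_n$ with the flow of the heavy particles, exactly as in the proof of Lemma~\ref{phi-t-homeomorphism}. Fix the weights $\vec p=(p_1,\ldots,p_n)$, put $\alpha_k(y_1,\ldots,y_n)=a\big(y_k,\sum_{j=1}^{n}p_j\delta_{y_j}\big)$, and let $\varphi_t$ be the flow on the configuration set $C_n=\{(y_1,\ldots,y_n)\in(\mathbb R^2)^n:\ y_i\neq y_j,\ i\neq j\}$ generated by $\dot y_k=\alpha_k(y)$; since trajectories of \eqref{eq-interaction} cannot collide, $\varphi_t(C_n)=C_n$. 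On the slice of $\mathfrak K_n$ with weights $\vec p$ one has $\Phi_t=\mathcal M_{\vec p}\circ\varphi_t\circ\mathcal M_{\vec p}^{-1}$, and $\mathcal M_{\vec p}$ is a Borel isomorphism of $C_n$ onto that slice carrying $m_n$ to $\mathfrak m_n$ (the ordering convention in $\mathcal M_{\vec p}^{-1}$ for atoms of equal weight affects only an $m_n$-null set when the $p_k$ are pairwise distinct, and in general is removed by symmetrising $p$ in those coordinates). Hence $\Phi_t$ preserves $\mathfrak m_n$ if and only if $\varphi_t$ preserves $m_n$ on $C_n$, and it suffices to rerun the proof of Theorem~\ref{Liouville-theorem} in dimension $2n$ for the system $\dot y=\alpha(y)$.

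Concretely, Lemma~\ref{lemma1} and Corollary~\ref{Liouville-formula} extend verbatim, giving $|\varphi_t'(y)|=\exp\int_0^t\trace[\alpha'_y(\varphi_s(y))]\,ds$. For a bounded Borel $A\subset C_n$ set $V(t)=m_n(\varphi_t(A))=\int_A p(\varphi_t(y))\exp\int_0^t\trace[\alpha'_y(\varphi_s(y))]\,ds\,dy$; differentiating under the integral sign (justified exactly as in Theorem~\ref{Liouville-theorem} by boundedness of $p$, $\nabla p$ and of $\alpha'_y$ on $A\times[0,T]$) gives
\[
V'(0)=\int_A\big[\nabla p(y)\cdot\alpha(y)+p(y)\,\dive_y\alpha(y)\big]\,dy=\int_A\dive_y\big(p(y)\alpha(y)\big)\,dy=\sum_{k=1}^{n}\int_A\dive_{y_k}\big(p(y)\,\alpha_k(y)\big)\,dy.
\]
By hypothesis each function $\dive_k\big(p(u_1,\ldots,u_n)a(u_k,\mu)\big)$ vanishes identically in $\mu$; specialising $\mu=\sum_j p_j\delta_{y_j}$ kills every summand, so $V'(0)=0$ for all bounded Borel $A\subset C_n$, and by continuity $\dive_y\big(p(y)\alpha(y)\big)\equiv 0$ on $C_n$. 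To upgrade $V'(0)=0$ to $V'(t)=0$ for all $t\geq0$ I would copy the time--shift device from Theorem~\ref{Liouville-theorem}: restart the interaction flow at time $t_0$ from $\mu_{t_0}$ (equivalently the ODE from $\varphi_{t_0}(y)$) and apply the divergence hypothesis again, which is legitimate precisely because it is assumed for \emph{every} $\mu\in\mathfrak M_0$. Thus $V$ is constant, i.e. $\varphi_t$ preserves $m_n$ and $\Phi_t$ preserves $\mathfrak m_n$.

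The differentiation--under--the--integral bookkeeping is routine, being word for word as in Theorem~\ref{Liouville-theorem}. The step that needs genuine care is the identity $\dive_{y_k}\alpha_k(y)=\trace a'_x\big(y_k,\sum_j p_j\delta_{y_j}\big)$ invoked above: in $\mathfrak K_n$ a single atom carries positive mass $p_k$, so a priori $\alpha_k(y)=a(y_k,\mu_y)$ depends on $y_k$ both through its spatial argument and through the atom of $\mu_y$ sitting at $y_k$, and one must check that this \emph{self--interaction} contribution is absent --- equivalently, that the frozen--measure divergence $\dive_k(p\,a(u_k,\mu))$ is the correct quantity. This holds automatically when the interaction coefficient is of mean--field type (as in the $N$-vortex model, where the diagonal term is excluded from the sum), and it is essentially the only place where the structure of the equation with interaction, rather than a plain $2n$-dimensional system, actually enters; the secondary nuisance of the equal--weight ordering convention in $\mathcal M_{\vec p}^{-1}$ is handled as indicated in the first paragraph.
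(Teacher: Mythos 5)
Your proposal is correct in outline and follows essentially the same route as the paper: reduce the claim to volume preservation of $m_n$ under the $2n$-dimensional heavy-particle flow, compute $V(t)$ by change of variables together with the Ostrogradsky--Liouville formula (Corollary~\ref{Liouville-formula}), identify $V'(0)$ with the integral of $\sum_{k}\dive_k\bigl(p(u_1,\ldots,u_n)\,a(u_k,\mu)\bigr)$, and pass from $t=0$ to all $t\geq 0$ via the restarted flow \eqref{new-flow}. The self-interaction caveat you flag --- that $\dive_{y_k}a\bigl(y_k,\sum_j p_j\delta_{y_j}\bigr)$ may differ from the frozen-measure divergence in the hypothesis because the atom sitting at $y_k$ moves with $y_k$ --- is a genuine point, but the paper's own proof does not treat it either: it differentiates with the measure path $\mu_t$ held fixed, taking the Jacobian block-diagonal with each block given by Corollary~\ref{Liouville-formula}, so your write-up is, if anything, more explicit about where the measure dependence of the coefficient enters.
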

\begin{proof}
According to~\eqref{eq-map_phi} it is enough to show that the measure $m_n$ is preserved under the map $(x(u_1,t),\dots,x(u_n,t))$.
Let $A=A_1\times\cdots\times A_n \subseteq (\mathbb{R}^2)^n$ be a bounded Borel set.
Define
$$V(t) = m_n(x(A_1, t)\times \cdots \times x(A_n, t)),\,\,t\geq 0,$$
where $x(A_k, t) = \{x(u_k,t), \, u_k\in A_k \subseteq \mathbb{R}^2\}, \,1\leq k \leq n$.
Compute $V(t)$ using the change of variables,
\begin{align*}
  V(t) &= \int_{x(A_1, t)}\cdots \int_{x(A_n, t)}p(u_1,\cdots,u_n){\rm d}u_1\cdots {\rm d}u_n\\
  &=\int_{A_1}\cdots \int_{A_n}
      p(x(u_1, t),\cdots,x(u_n, t)) \prod^{n}_{j=1}|x'_{u_j}(u_j, t)|
      {\rm d}u_1\cdots {\rm d}u_n.
\end{align*}

Compute the derivative of $V$ using the Corollary \ref{Liouville-formula}
\begin{align*}
  V'(t)= & \int_{A_1}\cdots \int_{A_n}
  \sum_{k=1}^{n}p'_k(x(u_1, t),\cdots,x(u_n, t))a(x(u_k,t), \mu_t) \prod^{n}_{j=1}|x'_{u_j}(u_j, t)|
  {\rm d}u_1\cdots {\rm d}u_n \\
  &+ \int_{A_1}\cdots \int_{A_n}
     \sum_{k=1}^{n} p(x(u_1, t),\cdots,x(u_n, t))
     \prod^{n}_{j=1}|x'_{u_j}(u_j, t)|
     \trace a'_{u_k, \mu}(x(u_k, t), \mu_t)
      {\rm d}u_1\cdots {\rm d}u_n.
\end{align*}
Thus,
\begin{align*}
  V'(0) =&\int_{A_1}\cdots \int_{A_n}
  \sum_{k=1}^{n}p'_k(u_1,\cdots,u_n)a(u_k, \mu_0)
  {\rm d}u_1\cdots {\rm d}u_n \\
  &+ \int_{A_1}\cdots \int_{A_n}
     \sum_{k=1}^{n} p(u_1,\cdots,u_n)
     \trace a'_{u_k, \mu}(u_k, \mu_0)
      {\rm d}u_1\cdots {\rm d}u_n\\
  =&\int_{A_1}\cdots \int_{A_n}\sum_{k=1}^{n}
  [p'_k(u_1,\cdots,u_n)a(u_k, \mu_0)+p(u_1,\cdots,u_n)
     \trace a'_{u_k, \mu}(u_k, \mu_0)]{\rm d}u_1\cdots {\rm d}u_n\\
  =&\int_{A_1}\cdots \int_{A_n}\sum_{k=1}^{n}
  \dive_k(p(u_1,\cdots,u_n)a(u_k, \mu_0)){\rm d}u_1\cdots {\rm d}u_n
\end{align*}
Hence, $V'(0)=0$ for all bounded Borel set $A\in (\mathbb{R}^2)^n$ which implies that
 $$\sum_{k=1}^{n}\dive_k(p(u_1,\cdots,u_n)a(u_k, \mu_0))=0\,\, a.e.$$
for $u_k\in \mathbb{R}^2,\,1\leq k \leq n$.
Due to the continuity of $\dive_k(p(u_1,\cdots,u_n)a(u_k, \mu_0))$,
$V'(0) = 0$ for all $A\in (\mathbb{R}^2)^n$ iff
$$\sum_{k=1}^{n}\dive_k(p(u_1,\cdots,u_n)a(u_k, \mu_0))=0,\,\, \forall u_k \in \mathbb{R}^2,\,1\leq k \leq n.$$
Therefore, if for all $u_1,\cdots,u_n \in \mathbb{R}^2,\,\mu \in \mathfrak{M}_0$,
$$\dive_k(p(u_1,\cdots,u_n)a(u_k, \mu))=0,\,\, 1\leq k \leq n,$$
$V'(0) = 0$ for all $A\in (\mathbb{R}^2)^n$.

Using the new flow \eqref{new-flow} similarly,
we obtain that if for all $u_1,\cdots,u_n \in \mathbb{R}^2,\,\mu \in \mathfrak{M}_0$,
$$\dive_k(p(u_1,\cdots,u_n)a(u_k, \mu))=0,\,\, 1\leq k \leq n,$$
$V'(t) = 0$ for all $t\geq 0,\,A\in (\mathbb{R}^2)^n$.
\end{proof}

\begin{example}{(Equal weights)}
A measure $M$ on $\mathfrak{K}_n$ is defined as follows.
Let $p\in C^1(\mathbb{R}^2)$ be a bounded and nonnegative function, $A\subseteq \mathfrak{K}_n$ and
$$M(A)=\int_{\mathbb{R}^2}\cdots\int_{\mathbb{R}^2}p(u_1)\cdots p(u_n)1_{A}\left(\tfrac{1}{n}\sum^{n}_{j=1}\delta_{u_k}\right){\rm d}u_1\cdots{\rm d}u_n.$$
If for all $\mu \in \mathfrak{M}_0$, $\dive p(\cdot)a(\cdot, \mu)=0$, then
$$M\circ\Phi_t^{-1}=M.$$
\end{example}

\begin{example}
  A measure $M_\lambda$ on $\bigcup^{\infty}_{n=1}\mathfrak{K}_n$ is defined as follows.
  Let $p\in C^1(\mathbb{R}^2)$ be a nonnegative function with $\int_{\mathbb{R}^2}p(u){\rm d}u=1$, $A=\bigcup^{\infty}_{n=1}A_n\subseteq\bigcup^{\infty}_{n=1}\mathfrak{K}_n$ with $A_n \subseteq \mathfrak{K}_n$ and
$$M_\lambda(A)=
\sum_{n=1}^{\infty}e^{-\lambda}\tfrac{\lambda^n}{n!}
\int_{\mathbb{R}^2}\cdots\int_{\mathbb{R}^2}p(u_1)\cdots p(u_n)1_{A_n}\left(\tfrac{1}{n}\sum^{n}_{j=1}\delta_{u_k}\right){\rm d}u_1\cdots{\rm d}u_n.$$
The interpretation of this measure is as follows. Assume that $\xi_1,\,\cdots, \xi_n, \cdots$ are i.i.d. random vectors in $\mathbb{R}^2$ with probability density $p$, $\zeta$ is a Poissonian random variable independent of $\{\xi_n\}^\infty_{n=1}$ with parameter $\lambda$. Then $M_\lambda$ can be associated with a distribution of the measure
$$\tfrac{1}{\zeta}\sum^{\zeta}_{k=1}\delta_{\xi_k}.$$
If for all $\mu \in \mathfrak{M}_0$, $\dive p(\cdot)a(\cdot, \mu)=0$, then
$$M_\lambda\circ\Phi_t^{-1}=M_\lambda.$$

\end{example}

\section{Asymptotic behaviour of invariants}
\subsection{Example: asymptotic of rotation number}
We start with an example of an asymptotic behaviour of rotation number for a simple equation:
\begin{equation}\label{ex4-rotation}
\begin{cases}
{\rm d}x(u, t)=g(\|x(u, t)\|)Ax(u, t){\rm d}t,\\
x(u, 0) =u, \,\,\,\, u\in \mathbb{R}^2,\\
g(0)=0,
\end{cases}
\end{equation}
where function $g: \mathbb{R}_+\rightarrow \mathbb{R}$ is a scale function satisfies Lipschitz condition,
matrix $A$ is a generator of rotation
$$A=\left(
      \begin{array}{cc}
        0 & 1 \\
        -1 & 0 \\
      \end{array}
    \right).
$$
The solution to this equation is given by
$$x(u, t) = ue^{ g(\|u\|)A t}, \,\,t\geq 0,$$
where
\begin{equation*}
e^{ g(\|u\|)A t}
=
 \left(
   \begin{array}{cc}
     \cos \left(g(\|u\|)t\right) & \sin \left(g(\|u\|) t \right) \\
     -\sin \left(g(\|u\|)t \right) & \cos \left(g(\|u\|)t\right) \\
   \end{array}
 \right).
\end{equation*}
Consequently,
\eqref{ex4-rotation} have invariant sets
$$\{ u\in \mathbb{R}^2; \|u\|=R, R\geq 0 \}.$$
It is easy to check that the winding number of $x(u, t)$ around the origin is
$\tfrac{g(R)t}{2\pi}$.
Thus, the angle of rotation at the trajectory which starts from the point $u_1$ (or $u_2$) around the trajectory which starts from $u_2$ (or $u_1$) is $\varphi(u_1, u_2, t)$
$$\lim\limits_{t\rightarrow\infty} \tfrac{\varphi(u_1, u_2, t)}{t}=g(\|u_1\|\vee \|u_2\|),$$
where $\|u_1\|\vee \|u_2\| = \max\{ \|u_1\|, \|u_2\|\}$, $u_1,\,u_2\in \mathbb{R}^2$.
Hence, the average number of rotations
\begin{align*}
&\lim\limits_{t\rightarrow\infty}
\tfrac{1}{t} \int_{\mathbb{R}^2}\int_{\mathbb{R}^2}\varphi(u_1, u_2, t)
\mu_0({\rm d}u_1)\mu_0({\rm d}u_2)\\
=&\int_{\mathbb{R}^2}\int_{\mathbb{R}^2}g(\|u_1\|\vee \|u_2\|)
\mu_0({\rm d}u_1)\mu_0({\rm d}u_2).
\end{align*}
For convenience, denote
$$\omega(u_1, u_2) = g(\|u_1\|\vee \|u_2\|) ,$$
then
\begin{align}\label{varphi(u_1, u_2)}
\varphi(u_1, u_2, t) = \omega(u_1, u_2)t+o(t), \quad t\rightarrow\infty.
\end{align}

Generally, let us consider a braid with $n-$strings
$$x(u_1, t), \cdots, x(u_n, t),\,\,n=3,\cdots.$$
Define
$$\triangle_k(t)=\left\{ (t_1, \cdots, t_k)|\, 0\leq t_1\leq\cdots\leq t_k \leq t\right\},\,\, k=2,\cdots, n.$$
The invariants of the braid can be written in terms of multiple integrals:
$$m_k(t) = \int_{\Delta_k(t)}{\rm d}\varphi(u_{j_1}, u_{j'_1}, t_1){\rm d} \varphi(u_{j_2}, u_{j'_2}, t_2)\cdots {\rm d} \varphi(u_{j_k}, u_{j'_k}, t_k),\,\, k=2,\cdots, n.$$

\begin{theorem}
The multiple integrals $m_k$ of the braid satisfy the following limit behavior
\begin{equation}\label{m_k(t)}
\lim\limits_{t\rightarrow\infty} \frac{m_k(t)}{t^k}
=\frac{1}{k!}\omega(u_{j_1}, u_{j'_1})\cdots\omega(u_{j_k}, u_{j'_k})
,\,\, k=2,\cdots, n.
\end{equation}
\end{theorem}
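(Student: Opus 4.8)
The plan is to reduce the multiple integral $m_k(t)$ to an iterated integral over the simplex $\Delta_k(t)$ using the asymptotic expansion \eqref{varphi(u_1, u_2)} for each factor $d\varphi(u_{j_\ell}, u_{j'_\ell}, t_\ell)$. First I would observe that, since each $\varphi(u_{j_\ell}, u_{j'_\ell}, t_\ell) = \omega(u_{j_\ell}, u_{j'_\ell}) t_\ell + o(t_\ell)$ as $t_\ell \to \infty$, the dominant contribution to $d\varphi(u_{j_\ell}, u_{j'_\ell}, t_\ell)$ is $\omega(u_{j_\ell}, u_{j'_\ell})\, dt_\ell$. Writing $d\varphi(u_{j_\ell}, u_{j'_\ell}, t_\ell) = \omega(u_{j_\ell}, u_{j'_\ell})\, dt_\ell + dr_\ell(t_\ell)$ where $r_\ell(t_\ell) = o(t_\ell)$, I would expand the product $\prod_{\ell=1}^k d\varphi(u_{j_\ell}, u_{j'_\ell}, t_\ell)$ into a sum of $2^k$ terms. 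The leading term is
$$
\omega(u_{j_1}, u_{j'_1})\cdots\omega(u_{j_k}, u_{j'_k}) \int_{\Delta_k(t)} dt_1\cdots dt_k = \omega(u_{j_1}, u_{j'_1})\cdots\omega(u_{j_k}, u_{j'_k})\,\frac{t^k}{k!},
$$
using the standard volume of the $k$-simplex. Dividing by $t^k$ gives the claimed limit, provided all the remaining $2^k - 1$ mixed terms are $o(t^k)$.

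The core of the argument is therefore to show that each mixed term is negligible after dividing by $t^k$. A typical such term, for a nonempty subset $S \subseteq \{1,\dots,k\}$ of indices where the remainder is taken, has the form
$$
\Big(\prod_{\ell \notin S}\omega(u_{j_\ell}, u_{j'_\ell})\Big)\int_{\Delta_k(t)} \Big(\prod_{\ell \notin S} dt_\ell\Big)\Big(\prod_{\ell \in S} dr_\ell(t_\ell)\Big).
$$
I would estimate this by integrating by parts or, more directly, by bounding the total variation: for a fixed $\varepsilon > 0$, there is $T_0$ so that $|r_\ell(s)| \leq \varepsilon s$ for $s \geq T_0$, and the contribution from the region where some $t_\ell \leq T_0$ is $O(t^{k-1})$ (it lives in a slab of bounded thickness in one coordinate). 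On the remaining region, iterating the bound $|r_\ell(t_\ell)| \le \varepsilon t_\ell \le \varepsilon t$ across the $|S|$ factors and using that the simplex has volume $t^k/k!$ yields a bound of the form $C\,\varepsilon^{|S|} t^k + O(t^{k-1})$. Since $|S| \geq 1$ and $\varepsilon$ is arbitrary, dividing by $t^k$ and letting $t \to \infty$, then $\varepsilon \to 0$, kills the term.

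The main obstacle is making the "integration against $dr_\ell$" rigorous: $r_\ell$ need not be of bounded variation on $[0,t]$ a priori, so the Riemann–Stieltjes manipulation needs care. I would handle this by working with the antiderivative directly — that is, never separate $d\varphi$ into two differentials but instead integrate by parts in each variable $t_\ell$ in turn, transferring the differential onto the (smooth, explicitly simplex-shaped) remaining integrand, so that only the function values $\varphi(u_{j_\ell}, u_{j'_\ell}, t_\ell) = \omega_\ell t_\ell + o(t_\ell)$ appear, never their differentials. Each integration by parts produces boundary terms on the faces of $\Delta_k(t)$, which are lower-dimensional and hence $O(t^{k-1})$, plus an interior term with one fewer differential; after $k$ steps one is left with a genuine Lebesgue integral of $\prod_\ell \varphi(u_{j_\ell}, u_{j'_\ell}, t_\ell)$ over $\Delta_k(t)$, to which the pointwise asymptotic \eqref{varphi(u_1, u_2)} and dominated convergence (after rescaling $t_\ell = t s_\ell$) apply directly, giving $\frac{1}{k!}\prod_\ell \omega(u_{j_\ell}, u_{j'_\ell}) \cdot t^k + o(t^k)$.
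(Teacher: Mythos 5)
Your leading-term computation (simplex volume $t^k/k!$ times $\prod_\ell \omega(u_{j_\ell},u_{j'_\ell})$) matches the target, but the route you offer to make the remainder analysis rigorous contains a genuine error, precisely at the point you yourself flagged. First, the claimed endpoint of the repeated integration by parts cannot be right: a ``genuine Lebesgue integral of $\prod_\ell \varphi(u_{j_\ell},u_{j'_\ell},t_\ell)$ over $\Delta_k(t)$'' scales like $t^{2k}$ (each $\varphi_\ell(t_\ell)\sim \omega_\ell t_\ell$), not $t^k$, so it cannot equal $m_k(t)+o(t^k)$. Second, the boundary terms produced by integrating by parts on the simplex are \emph{not} $O(t^{k-1})$: already for $k=2$, $\int_0^t \varphi_1(s)\,d\varphi_2(s)=\varphi_1(t)\varphi_2(t)-\int_0^t\varphi_1'(s)\varphi_2(s)\,ds$, and the boundary term is of order $t^2$, the same order as $m_2(t)$. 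The paper's own $k=2$ computation shows these boundary contributions are of leading order and enter the final constant (the boundary piece contributes $\omega(u_2,u_2')$ and the interior piece subtracts $\tfrac12\omega(u_2,u_2')$), so they cannot be discarded. Third, in the $2^k$-term decomposition, the bound $C\varepsilon^{|S|}t^k$ for a mixed term is not justified by ``bounding the total variation'': smallness of $r_\ell(s)=o(s)$ controls the \emph{values} of $r_\ell$, not the Stieltjes measure $dr_\ell$; to control $\int(\cdots)\,dr_\ell$ you need either a bound on $r_\ell'=\varphi_\ell'-\omega_\ell$ (i.e.\ on $\varphi'$) or an integration by parts that trades $dr_\ell$ for $r_\ell$ while correctly tracking the (leading-order, but $O(\varepsilon t^k)$) boundary terms. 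That missing ingredient, $L=\sup_{s\ge 1}|\varphi'(\cdot,\cdot,s)|<\infty$, is exactly what the paper uses.

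For comparison, the paper argues by induction on $k$: it writes $m_k(t)$ as an iterated integral, replaces the inner ratio $m_{k-1}(s)/s^{k-1}$ (respectively $\varphi(s)/s$) by its limit via a Toeplitz-type estimate using the bound $L$ on $\varphi'$, and then evaluates $\lim t^{-k}\int_1^t s^{k-1}\varphi'(s)\,ds$ by one integration by parts plus L'Hospital, where the boundary term and the interior term are both of order $t^k$ and combine to give the factor $1/k$ at each step, yielding $1/k!$ overall. Your decomposition $d\varphi_\ell=\omega_\ell\,dt_\ell+dr_\ell$ could be completed along similar lines, but only after importing the boundedness of $\varphi'$ (equivalently, Lipschitzness of $r_\ell$) and redoing the boundary-term bookkeeping; as written, the key estimate for the mixed terms and the final reduction both fail.
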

\begin{proof}
When start with $k=2$, then
\begin{align*}
\lim\limits_{t\rightarrow\infty}\tfrac{m_2(t)}{t^2}
=&\lim\limits_{t\rightarrow\infty}\tfrac{1}{t^2} \int^{t}_{0}\int_{0}^{t_2}{\rm d}\varphi(u_1, u'_1, t_1) {\rm d} \varphi(u_2, u'_2, t_2)\\
=&\lim\limits_{t\rightarrow\infty}\tfrac{1}{t^2} \int^{t}_{0}\varphi(u_1, u'_1, s) {\rm d} \varphi(u_2, u'_2, s)\\
=&\lim\limits_{t\rightarrow\infty}\tfrac{1}{t^2} \int^{t}_{1}s\tfrac{\varphi(u_1, u'_1, s)}{s}  \varphi'(u_2, u'_2, s){\rm d}s.
\end{align*}
By continuous Toeplitz theorem and \eqref{varphi(u_1, u_2)}, we have
\begin{align*}
&\lim\limits_{t\rightarrow\infty}\tfrac{1}{t^2} \int^{t}_{1}s\left|\tfrac{\varphi(u_1, u'_1, s)}{s}-\omega(u_1, u'_1) \right| |\varphi'(u_2, u'_2, s)|{\rm d}s\\
&\leq L \lim\limits_{t\rightarrow\infty}\tfrac{1}{t^2} \int^{t}_{1}s\left|\tfrac{\varphi(u_1, u'_1, s)}{s}-\omega(u_1, u'_1) \right| {\rm d}s =0,
\end{align*}
where $L= \sup\limits_{s\geq 1}|\varphi'(u_2, u'_2, s)|$.
Thus,
\begin{align*}
&\lim\limits_{t\rightarrow\infty}\tfrac{1}{t^2} \int^{t}_{1}s\tfrac{\varphi(u_1, u'_1, s)}{s}  \varphi'(u_2, u'_2, s){\rm d}s\\
=&\omega(u_1, u'_1)   \lim\limits_{t\rightarrow\infty}\tfrac{1}{t^2} \int^{t}_{1}s\varphi'(u_2, u'_2, s){\rm d}s\\
=&\omega(u_1, u'_1)   \lim\limits_{t\rightarrow\infty}\tfrac{1}{t^2} \left(t\varphi(u_2, u'_2, t)-\varphi(u_2, u'_2, 1) -\int^{t}_{1}\varphi(u_2, u'_2, s){\rm d}s\right)\\
=&\omega(u_1, u'_1)   \lim\limits_{t\rightarrow\infty}\left(\tfrac{1}{t} \varphi(u_2, u'_2, t) -\tfrac{1}{t^2}\int^{t}_{1}\varphi(u_2, u'_2, s){\rm d}s\right).
\end{align*}
Using L'Hospital theorem,
\begin{align*}
\lim\limits_{t\rightarrow\infty}\tfrac{1}{t^2}\int^{t}_{1}\varphi(u_2, u'_2, s){\rm d}s=\lim\limits_{t\rightarrow\infty}\tfrac{1}{2t}\varphi(u_2, u'_2, t).
\end{align*}
Then, by \eqref{varphi(u_1, u_2)}, we have
\begin{align*}
&\omega(u_1, u'_1)   \lim\limits_{t\rightarrow\infty}\left(\tfrac{1}{t} \varphi(u_2, u'_2, t) -\tfrac{1}{t^2}\int^{t}_{1}\varphi(u_2, u'_2, s){\rm d}s\right)\\
=&\omega(u_1, u'_1)\lim\limits_{t\rightarrow\infty}\tfrac{1}{2t}\varphi(u_2, u'_2, t)
=\tfrac{1}{2}\omega(u_1, u'_1)\omega(u_2, u'_2).
\end{align*}

For $k=3$, \eqref{m_k(t)} is represented by
\begin{align*}
\lim\limits_{t\rightarrow\infty}\tfrac{m_3(t)}{t^3}
=&\lim\limits_{t\rightarrow\infty}\tfrac{1}{t^3} \int^{t}_{1}\int^{t_3}_{1}\int_{1}^{t_2}{\rm d}\varphi(u_1, u'_1, t_1) {\rm d} \varphi(u_2, u'_2, t_2){\rm d} \varphi(u_3, u'_3, t_3)\\
=&\lim\limits_{t\rightarrow\infty}\tfrac{1}{t^3} \int^{t}_{1} (t_3)^2\left(\tfrac{1}{(t_3)^2}\int^{t_3}_{1}\int_{1}^{t_2}{\rm d}\varphi(u_1, u'_1, t_1) {\rm d} \varphi(u_2, u'_2, t_2)\right){\rm d} \varphi(u_3, u'_3, t_3)\\
=&\tfrac{1}{2}\omega(u_1, u'_1)\omega(u_2, u'_2)\lim\limits_{t\rightarrow\infty}\tfrac{1}{t^3} \int^{t}_{1} s^2\varphi'(u_3, u'_3, s){\rm d}s\\
=&\tfrac{1}{2}\omega(u_1, u'_1)\omega(u_2, u'_2)\lim\limits_{t\rightarrow\infty}\tfrac{1}{t^3}
\left(
t^2\varphi(u_3, u'_3, t)-\varphi(u_3, u'_3, 1)-
\int^{t}_{1}2s \varphi(u_3, u'_3, s){\rm d}s\right)\\
=&\tfrac{1}{2}\omega(u_1, u'_1)\omega(u_2, u'_2)\left(\omega(u_3, u'_3)-\tfrac{2}{3}\omega(u_3, u'_3) \right)\\
=&\tfrac{1}{6}\omega(u_1, u'_1)\omega(u_2, u'_2)\omega(u_3, u'_3).
\end{align*}

By mathematical induction, one can prove \eqref{m_k(t)}.

\end{proof}


\subsection{The average invariants when the weights are equal}
In this subsection, we consider the general case of the equation with interaction with discrete initial measure with equal weights, i.e.
$$\mu_0 \in  \mathfrak{\mathring{K}}_n = \left\{ \sum_{k=1}^{n}\tfrac{1}{n}\delta_{u_k}: u_k \in \mathbb{R}^2, u_k\neq u_j, k\neq j \right\}.$$
Define maps $\Phi_n: \mathfrak{\mathring{K}}_n\times [0, +\infty)\mapsto \mathfrak{\mathring{K}}_n$ with
$$\Phi_n(\mu_0, t)=\mu_t, \,\, t\geq 0, n=1,2\cdots,$$
where $\mu_t$ is the measure-valued solution to the equation with interaction \ref{eq-interaction}:
\begin{equation}
\begin{cases}
{\rm d}x(u,t)=a(x(u,t),\mu_t){\rm d}t,\\	
x(u,0)=u,\,\,\,\, u\in \mathbb{R}^2,\\
\mu_t=\mu_0\circ x(\cdot,t)^{-1}.
\end{cases}
\end{equation}
Then $\Phi_n$ satisfies the semi-group property
$$\Phi_n\left(\Phi_n(\mu_0, s), t\right)=\Phi_n(\mu_0, t+s), \,\, t,s\geq 0.$$
By Lemma \ref{phi-t-homeomorphism}, $\Phi_n(\cdot, t)$ is a homeomorphism on $\mathfrak{\mathring{K}}_n$.

For measure $\nu^{\otimes n}$ on $(\mathbb{R}^2)^{n}$, define the measure $\Theta_n$ on $\mathfrak{\mathring{K}}_n$ as the image of $\nu^{\otimes n}$ under the map
$$(u_1, \cdots, u_n)\mapsto \tfrac{1}{n}\sum_{j=1}^{n}\delta_{u_k},\,\, n=1,2,\cdots$$
If the assumptions in Theorem \eqref{Liou-th-for-k_n} hold, then
$$\Theta_n\circ \Phi_n(\cdot, t)^{-1}=\Theta_n,\,\, n=1,2,\cdots.$$


By Ergodic Theorem \cite{Halmos}, for a measurable bounded function $G:\mathfrak{\mathring{K}}_n\to \mathbb R,$
\begin{equation}\label{G-limit}
 \lim\limits_{t\rightarrow\infty}\frac{1}{t}\int^{t}_{0}G(\Phi_n(\mu_0, s)){\rm d}s=
 E(G| \mathcal{I}_n), \,\, n=1,2,\cdots
\end{equation}
where $E(G| \mathcal{I}_n )$ is the conditional expectation of $G$ with respect to the $\mathcal{I}_n$ which is $\sigma$-algebra of $\Phi_n$-invariant sets with
$$E(G| \mathcal{I}_n )=\int_{\mathcal{I}_n}G{\rm d} \mu_0.$$

It is known that Vasil'ev's invariants can be expressed via the integral Knotsevich invariants. For more information of Kontsevich integral for braids can be found in Section 2 of \cite{Kuznetsov2015}.

We will consider the invariants of trajectories that represented as multiple integrals:
$$m_k(t)=\underbrace{\int_{\mathbb{R}^2}\cdots\int_{\mathbb{R}^2}}_{2k} \int_{\triangle_k(t)}{\rm d}\varphi(u_{1}, u'_{1}, t_1)\cdots {\rm d}\varphi(u_{k}, u'_{k}, t_k)
\mu_0({{\rm d}u_{1}})\mu_0({{\rm d}u'_{1}})\cdots \mu_0({{\rm d}u_{k}})\mu_0({{\rm d}u'_{k}}).$$
For the invartiants $m_k(t)$ we have the following asymptotic behaviour.
\begin{theorem}
Choose $\mu_0 = \tfrac{1}{n}\sum_{k=1}^{n}\delta_{u_k}$ in equations with interaction \eqref{eq-interaction}.
Assume that for all $u_1, \cdots, u_n \in \mathbb{R}^2,\,\mu \in \mathfrak{\mathring{K}}_n$,
$$\dive_k \left( p(u_1,\cdots,u_n)a(u_k, \mu)\right)= 0,\, 1\leq k \leq n.$$
Then the average invariants
\begin{align}\label{G-ave}
\lim\limits_{t\rightarrow\infty}\frac{m_k(t)}{t^k}
=\frac{1}{k!}(E(G| \mathcal{I}_n))^k, \,\, k=1,2,\cdots.
\end{align}
\end{theorem}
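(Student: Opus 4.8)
The plan is to reduce the $k$-fold integral to the $k=1$ relation $m_1(t)=\int_0^t G(\Phi_n(\mu_0,s))\,ds$ from Section~3, by first factorizing the $2k$ spatial integrals, then symmetrizing over the time simplex, and finally invoking the ergodic theorem \eqref{G-limit}.

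First I would rewrite $m_k(t)$. Using $d\varphi(u,v,s)=F(x(u,s),x(v,s),\mu_s)\,ds$, the integrand of $m_k(t)$ is, up to $dt_1\cdots dt_k$, the product $\prod_{j=1}^k F\bigl(x(u_j,t_j),x(u_j',t_j),\mu_{t_j}\bigr)$, and the key point is that the $j$-th factor depends only on the dummy variables $(u_j,u_j',t_j)$: indeed $\mu_{t_j}=\mu_0\circ x(\cdot,t_j)^{-1}$ is a fixed probability measure for each value of $t_j$, not a function of the points being integrated. Since $a$ is Lipschitz with some constant $L$, the numerator of $F$ is bounded by $L$ times the modulus of its denominator, so $|F|\le\tfrac{L}{2\pi}$ (consistently with the convention $\varphi(u,u,t)\equiv0$), and Fubini applies on the bounded set $\triangle_k(t)\times(\mathbb{R}^2)^{2k}$ with the product measure. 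Integrating the spatial variables first, the product structure splits them into $k$ identical double integrals; each of these, after the change of variables $v=x(u_j,t_j)$, $v'=x(u_j',t_j)$ and using that $\mu_{t_j}$ is the push-forward of $\mu_0$ — exactly the computation already carried out for $k=1$ — equals $G(\Phi_n(\mu_0,t_j))$. Hence
\[
m_k(t)=\int_{\triangle_k(t)}\ \prod_{j=1}^k G\bigl(\Phi_n(\mu_0,t_j)\bigr)\,dt_1\cdots dt_k .
\]

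Next I would symmetrize. Put $g(s):=G(\Phi_n(\mu_0,s))$; this is bounded because $G$ is bounded on $\mathfrak{\mathring{K}}_n$, and measurable because $s\mapsto\Phi_n(\mu_0,s)$ is continuous (the heavy trajectories $x(u_\ell,\cdot)$ are $C^1$, so $\mu_s$ moves continuously in the Wasserstein metric and remains in $\mathfrak{\mathring{K}}_n$ by the homeomorphism property of $x(\cdot,s)$). Since $\prod_{j=1}^k g(t_j)$ is symmetric in $(t_1,\dots,t_k)$ and the $k!$ ordered simplices tile $[0,t]^k$ up to a null set,
\[
m_k(t)=\frac{1}{k!}\int_{[0,t]^k}\prod_{j=1}^k g(t_j)\,dt_1\cdots dt_k=\frac{1}{k!}\left(\int_0^t G(\Phi_n(\mu_0,s))\,ds\right)^{\!k}=\frac{1}{k!}\bigl(m_1(t)\bigr)^k .
\]

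Finally I divide by $t^k$ and pass to the limit. Under the stated divergence-free condition, Theorem~\ref{Liou-th-for-k_n} (in the equal-weights form) makes $\Theta_n$ invariant under $\Phi_n(\cdot,t)$, so $(\mathfrak{\mathring{K}}_n,\Theta_n,\Phi_n)$ is a measure-preserving flow and \eqref{G-limit} applies to the bounded measurable function $G$; hence, for $\Theta_n$-a.e. $\mu_0$,
\[
\frac{m_k(t)}{t^k}=\frac{1}{k!}\left(\frac{1}{t}\int_0^t G(\Phi_n(\mu_0,s))\,ds\right)^{\!k}\ \xrightarrow[\ t\to\infty\ ]{}\ \frac{1}{k!}\bigl(E(G\mid\mathcal{I}_n)\bigr)^k ,
\]
which is \eqref{G-ave}. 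The main obstacle is the care in the first step: one must verify that $G$ is genuinely bounded and measurable on $\mathfrak{\mathring{K}}_n$ (the diagonal cancellation being controlled by the Lipschitz constant of $a$ together with the convention $\varphi(u,u,t)=0$), that the orbit stays inside $\mathfrak{\mathring{K}}_n$ so that $G\circ\Phi_n(\cdot,s)$ is well defined, and that the applications of Fubini and of the push-forward change of variables are legitimate. The symmetrization identity and the appeal to the ergodic theorem are then routine.
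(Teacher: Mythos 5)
Your proof is correct, but it takes a genuinely different route from the paper. You exploit the fact that in the averaged invariant $m_k(t)$ every pair $(u_j,u_j')$ is integrated against the same measure $\mu_0\otimes\mu_0$, so after Fubini and the push-forward change of variables the spatial integrals factor into $k$ identical factors, giving the exact identity
\begin{equation*}
m_k(t)=\int_{\triangle_k(t)}\prod_{j=1}^k G\bigl(\Phi_n(\mu_0,t_j)\bigr)\,dt_1\cdots dt_k=\frac{1}{k!}\Bigl(\int_0^t G(\Phi_n(\mu_0,s))\,ds\Bigr)^{k}=\frac{1}{k!}\bigl(m_1(t)\bigr)^k,
\end{equation*}
after which a single application of the Birkhoff limit \eqref{G-limit} finishes the argument. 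The paper instead argues by induction on $k$: it peels off one time integral at a time via the representation $m_{n+1}(t)=\int_0^t\!\int\!\int m_n(s)\,{\rm d}\varphi(u_1,u_1',s)\,\mu_0({\rm d}u_1)\mu_0({\rm d}u_1')\,{\rm d}s$ and then controls the resulting iterated integrals with the continuous Toeplitz theorem, integration by parts, and L'Hospital's rule. Your symmetrization over the simplex replaces all of that analytic machinery by an algebraic identity, which is both shorter and stronger (it is exact for every finite $t$, not only asymptotic); both arguments rest on the same ingredients otherwise, namely the invariance of $\Theta_n$ supplied by Theorem \ref{Liou-th-for-k_n} and the ergodic theorem for the bounded measurable functional $G$ (your bound $|F|\le L/(2\pi)$ via the Lipschitz property of $a$, together with the diagonal convention $\varphi(u,u,t)=0$, is the right justification, and is needed here because the atoms give the diagonal positive $\mu_0\otimes\mu_0$-mass). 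You are also more careful than the paper on one point: Birkhoff's theorem yields the convergence for $\Theta_n$-a.e.\ initial measure $\mu_0$, a qualification the paper's statement leaves implicit.
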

\begin{proof}
From our considerations before the theorem the asymptotic \eqref{G-ave} follows for $k=1$.
To prove the statement for the general $k$, we use the method of mathematical induction. Assume that the asymptotic \eqref{G-ave} holds for $k=1,\ldots, n.$ Then for $k=n+1$ we can represent
$$
m_{n+1}(t) =\int^{t}_{0}\int_{\mathbb{R}^2}\int_{\mathbb{R}^2}m_n(s){\rm d}\varphi(u_{1}, u'_{1}, s)\mu_0({{\rm d}u_{1}})\mu_0({{\rm d}u'_{1}}){\rm d}s.
$$
From  this we get
\begin{align*}
&\lim\limits_{t\rightarrow\infty}\frac{m_{n+1}(t)}{t^{n+1}}\\
=&\int^{t}_{0}\int_{\mathbb{R}^2}\int_{\mathbb{R}^2}s^n \frac {m_n(s)}{s^n}{\rm d}\varphi(u_{1}, u'_{1}, s)\mu_0({{\rm d}u_{1}})\mu_0({{\rm d}u'_{1}}){\rm d}s\\
=&\frac{1}{t^{n+1}}\int^{t}_{0}\int_{\mathbb{R}^2}\int_{\mathbb{R}^2}s^n \left(\frac {m_n(s)}{s^n}-E(G| \mathcal{I}_n) \right){\rm d}\varphi(u_{1}, u'_{1}, s)\mu_0({{\rm d}u_{1}})\mu_0({{\rm d}u'_{1}}){\rm d}s  \\
+&\frac{1}{t^{n+1}}\int^{t}_{0}\int_{\mathbb{R}^2}\int_{\mathbb{R}^2}s^n E(G| \mathcal{I}_n) {\rm d}\varphi(u_{1}, u'_{1}, s)\mu_0({{\rm d}u_{1}})\mu_0({{\rm d}u'_{1}}){\rm d}s.
\end{align*}
By continuous Toeplitz theorem for the first summand we have the upper bound
\begin{align*}
&\lim\limits_{t\rightarrow\infty}\tfrac{1}{t^{n+1}} \int^{t}_{1}s^n\left|\tfrac{m_n( s)}{s^n}-E(G| \mathcal{I}_n) \right| |\varphi'(u_2, u'_2, s)|{\rm d}s\\
&\leq L \lim\limits_{t\rightarrow\infty}\tfrac{1}{t^2} \int^{t}_{1}s^n\left|\tfrac{m_n( s)}{s^n}-E(G| \mathcal{I}_n) \right| {\rm d}s =0.
\end{align*}
For the second summand, using integrating by parts and  L'Hospital theorem we get
\begin{align*}
& \lim\limits_{t\rightarrow\infty}\tfrac{1}{t^{n+1}} \int_{\mathbb{R}^2}\int_{\mathbb{R}^2} \int^{t}_{1}s^n\varphi'(u_2, u'_2, s)
{\rm d}s
\mu_0({{\rm d}u_{2}})\mu_0({{\rm d}u'_{2}})\\
=& \lim\limits_{t\rightarrow\infty}\tfrac{1}{t^{n+1}}\int_{\mathbb{R}^2}\int_{\mathbb{R}^2} \left(t^{n}\varphi(u_2, u'_2, t)-\varphi(u_2, u'_2, 1) -\int^{t}_{1}ns^{n-1}\varphi(u_2, u'_2, s){\rm d}s\right)\mu_0({{\rm d}u_{2}})\mu_0({{\rm d}u'_{2}})\\
=&  \lim\limits_{t\rightarrow\infty}\int_{\mathbb{R}^2}\int_{\mathbb{R}^2}\left(\tfrac{1}{t} \varphi(u_2, u'_2, t) -\tfrac{1}{t^2}\int^{t}_{1}\varphi(u_2, u'_2, s){\rm d}s\right)\mu_0({{\rm d}u_{2}})\mu_0({{\rm d}u'_{2}})=\frac{E(G| \mathcal{I}_n)}{n+1}.
\end{align*}
This gives the prove of the statement.
\end{proof}

\section{Ergodic theorem for functionals of measure-valued processes}

In this subsection, consider the general case without heavy weights.
$\mathfrak{M}$ is a complete separable metric space of all probability measures on $\mathbb{R}^2$ with Wasserstein distance $\gamma_0$,
As before, we consider the maps $\Phi_t: \mathfrak{M} \mapsto \mathfrak{M}$ with
$$\Phi_t(\mu_0)=\mu_t, \,\, t\geq 0,$$
where $\mu_t$ be a measure-valued function determined by equation~\eqref{eq-interaction}.

We mentioned before that the family of the map $\{\Phi_t$, $t\geq 0\}$, satisfies the evolutionary property
\[
  \Phi_{t+s}=\Phi_{t}\circ\Phi_s\quad t,s \geq 0.
\]

We can write Vasiliev's invariants for braids in the flow that start from arbitrary initial measure $\mu_0$ as a functional of this measure. 
For $\mu_0\in\mathfrak{M}$, consider the equation
\begin{equation*}
\begin{cases}
{\rm d}x(u,t)=a(x(u,t),\mu_{t}){\rm d}t,\\	
x(u,0)=u,\ u\in \mathbb{R}^2,\\
\mu_{t}=\mu\circ x(\cdot,t)^{-1}.
\end{cases}
\end{equation*}
From this equation it is easy to see that for any bounded function $f$
$$
\int_{\mathbb R^2} f(x(u,t), x'(u,t))\mu_0(du) = \int_{\mathbb R^2}f(u, a(u,\mu_t) \mu_t(du).
$$
Using such representation we can write the invariants for braids in the flow with the arbitrary initial measure as
\begin{align*}
    m_k(t)=\underbrace{\int_{\mathbb{R}^2}\cdots\int_{\mathbb{R}^2}}_{2k} \int_{\triangle_k(t)}{\rm d}\varphi(u_{1}, u'_{1}, t_1)\cdots {\rm d}\varphi(u_{k}, u'_{k}, t_k)
\mu_0({{\rm d}u_{1}})\mu_0({{\rm d}u'_{1}})\cdots \mu_0({{\rm d}u_{k}})\mu_0({{\rm d}u'_{k}}),
\end{align*}
where as before, 
\begin{align*}
\varphi(u, v, t) &= \frac{1}{2\pi i}\int^{t}_{0}\frac{a_1(x(v, s), \mu_s)- a_1(x(u, s), \mu_s) + i\left(a_2(x(v, s), \mu_s) - a_2(x(u,s), \mu_s)\right)}{x_1(v, t)-x_1(u, t) + i(x_2(v, t)-x_2(u, t))}{\rm d}s\\
&= : \int^{t}_{0} F(u, v, \mu_s){\rm d}s.
\end{align*}

Now in order to apply the Ergodic theorem, we should have a compact space of measures. For this aim we consider the space $\mathfrak M_D$ of all probability measures on closed bounded domain $D$ in $\mathbb R^2$ (this means that for $\mu \in \mathfrak M_D$  $\mu (\mathbb R\setminus D) = 0.$)

\begin{lemma}
    $\mathfrak M_D$ is a compact set.
\end{lemma}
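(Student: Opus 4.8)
The plan is to reduce the statement to the classical fact that the space of probability measures on a compact metric space is compact in the topology of weak convergence, and then to recall that on a bounded metric space this topology is precisely the one induced by the Wasserstein distance $\gamma_0$.

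First I would note that $D$, being closed and bounded in $\mathbb{R}^2$, is compact by the Heine--Borel theorem. Hence every $\mu\in\mathfrak{M}_D$ is trivially tight, the compact set $D$ itself carrying all of its mass, so the whole family $\mathfrak{M}_D$ is uniformly tight; by Prokhorov's theorem it is relatively compact in the topology of weak convergence of measures on $\mathbb{R}^2$. Next I would check that $\mathfrak{M}_D$ is closed for this topology: if $\mu_n\in\mathfrak{M}_D$ and $\mu_n\Rightarrow\mu$ weakly, then, since $D$ is closed, the Portmanteau theorem gives $\mu(D)\geq\limsup_n\mu_n(D)=1$, so $\mu(D)=1$ and $\mu\in\mathfrak{M}_D$. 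A closed subset of a relatively compact set is compact, which yields compactness of $\mathfrak{M}_D$ in the weak topology. (Alternatively one can bypass Prokhorov entirely and argue via Banach--Alaoglu: identify $\mathfrak{M}_D$ with the set of positive, unit-mass linear functionals on the separable Banach space $C(D)$, a weak-$*$ closed subset of the unit ball of $C(D)^*$, hence weak-$*$ compact and metrizable.)

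Finally, I would invoke the standard equivalence (see, e.g., \cite{Dudley}) that on a bounded metric space — and $D$ is bounded — the Wasserstein distance $\gamma_0$ metrizes weak convergence; therefore the weak topology on $\mathfrak{M}_D$ agrees with the $\gamma_0$-topology, and the compactness just established is exactly compactness of $(\mathfrak{M}_D,\gamma_0)$. The only point that needs a little care is this last identification of the two topologies on $\mathfrak{M}_D$; the tightness, closedness, and Prokhorov steps are routine applications of classical results.
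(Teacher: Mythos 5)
Your argument is correct, but your main route differs from the paper's. The paper argues in one line via functional analysis: it views $\mathfrak M_D$ as a norm-one, weak$^*$-closed subset of the unit ball of $C(D)^*$ and invokes Banach--Alaoglu to get weak$^*$-compactness (this is exactly the alternative you mention parenthetically). You instead take the probabilistic route: compactness of $D$ gives uniform tightness, Prokhorov yields relative compactness in the weak topology, Portmanteau gives closedness of $\mathfrak M_D$, and then you convert weak compactness into $\gamma_0$-compactness by noting that the Wasserstein distance metrizes weak convergence on a bounded space. What your version buys is precisely the step the paper leaves implicit: the paper's lemma is stated for the metric space $(\mathfrak M_0,\gamma_0)$, and Banach--Alaoglu by itself only gives weak$^*$-compactness, so one still needs (separability of $C(D)$ for metrizability and) the identification of the weak$^*$ topology on $\mathfrak M_D$ with the topology induced by $\gamma_0$ --- which in this paper is the Wasserstein distance built from the bounded cost $\|u-v\|/(1+\|u-v\|)$ and hence does metrize weak convergence. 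Your explicit handling of that identification makes the proof more complete for the purpose the lemma serves later (applying the ergodic theorem on the compact metric space $\mathfrak M_D$), at the cost of invoking Prokhorov and Portmanteau where the paper gets by with a single soft compactness theorem.
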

\begin{proof}
    Note that $\mathfrak M_D\subset \left(C(D)\right)^*$ is a closed ball. Indeed, for any $\mu \in \mathfrak M_D,$ $\| \mu \|= 1, $ and if $\mu_n \rightarrow \mu$ then $\mu \in \mathfrak M_D.$ So, by Banach-Alaoglu theorem, $\mathfrak M_D$ is weak$^*$-compact. 
\end{proof}

In the next subsection we investigate conditions of coefficients of an equation with interaction that guarantee that starting from a measure $\mu \in \mathfrak M_D$ the distribution of mass $\mu_t$ driven by an equation with interaction belong to $\mathfrak M_D$ for each $t.$ 

\subsection{Invariant set for a differential equation with interaction}
Let $D$ be a closed domain in $\mathbb{R}^{2}$. The aim is to find conditions on the function $a$ such that $$u \in D \Rightarrow x(u, t) \in D \text{ for all } t \geq 0.$$

First, let us recall the known result for the ordinary differential equation. 

\textbf{Theorem  } (Thm 7.2 in \cite{Ferrera}).  \textit{Let $\varphi: \mathbb{R}^{n} \rightarrow \mathbb{R}^{n}$ be a Lipschitz function. A closed set $S \subset \mathbb{R}^{n}$ is flow invariant with respect to a differential equation $x^{\prime}=\varphi(x)$ if and only if $\langle \varphi(x), \zeta\rangle \leq 0$ for every $x \in S$ and every $\zeta \in \hat{N}_{S}(x)$.}

Here $\hat{N}_{S}(x)$ is the regular normal cone defined as

\textbf{Definition} (Def. 4.12 in \cite{Ferrera}). \textit{For a set $S \subset X$ and $x_{0} \in S$, a vector $v \in X$ is normal to $S$ at $x_{0}$ in the regular sense, written as $v \in \hat{N}_{S}\left(x_{0}\right)$, if
$$
\left\langle v, x-x_{0}\right\rangle \leq o\left(\left|x-x_{0}\right|\right) \quad \text { for every } \quad x \in S \text{ as } |x-x_0|\to 0.
$$
The set of all such vectors, $\hat{N}_{S}\left(x_{0}\right)$, is called the regular normal cone.}

We can repeat the proof of theorem 7.2 with minor changes to get a similar result for the case of differential equation with interaction. The proof of this theorem uses the Euler method of approximation of the solution of a differential equation. So, our first step is the corresponding statement about the convergence of this scheme to the solution of an equation with interaction. 

Let us consider a differential equation  with interactions (\ref{eq-interaction}):
\begin{equation*}
\label{Equation}
\left\{\begin{array}{l}
d x(u, t)=a\left(x\left(u, t\right), \mu_{t}\right) d t, \\
x(u, 0)=u, u \in \mathbb{R}, \\
\mu_{t}=\mu_{0} \circ x(\cdot, t)^{-1}.
\end{array}\right.
\end{equation*}
For arbitrary partition of the interval $[0, T]$
$$
\pi=\left\{0=t_{0}<t_{1}<\ldots<t_{N}=T\right\}
$$
consider an Euler polygonal line, $x_{\pi}$, defined as follows:

\begin{equation}
\label{euler}
    \left\{\begin{array}{l}
x_{\pi}(u, 0)=u, \quad \tilde{\mu_{0}}=\mu_0, \\
x_{\pi}(u, t)=x_{\pi}\left(u, t_{i-1}\right)+\left(t-t_{i-1}\right) a\left(x_{\pi}\left(u, t_{i-1}\right), \widetilde{\mu}_{t_{i-1}}\right), \\
\quad \quad  \quad \quad\text { for } t \in\left(t_{i-1}, t_{i}\right], \quad i=1, \ldots, N \\
\tilde{\mu}_{t_{i}}=\mu_{0} \circ x_{\pi}\left(\cdot, t_{i}\right)^{-1} .
\end{array}\right.
\end{equation}
Denote $\delta(\pi)=\max _{i=1, \ldots N}\left(t_{i}-t_{i-1}\right).$

\begin{theorem} 
\label{Thm_euler_appox}
Let $a: \mathbb{R}^{2} \times \mathfrak M \rightarrow \mathbb{R}^{2}$
be a Lipschitz function with respect to both variables, i.e. there exists a constant $L$ such that
$$
\|a(u_1, \mu_1) - a(u_2, \mu_2)\|\leq L\left(\|u_1-u_2\| + \gamma(\mu_1, \mu_2)\right).
$$
Assume that $a$ is uniformly bounded, that is
$$
\|a(u, \mu)\|\leq C_1, \text{ for all } x\in \mathbb R^2 \text{ and } \mu \in \mathfrak M.  
$$
Then
$$\sup _{t \in[0, T]} \int_{\mathbb{R}^{2}}\left\|x_{\pi}(u, t)-x(u, t)\right\| \mu_{0}(d u) \rightarrow 0 \text{ as } \delta(\pi) \rightarrow 0.$$
\end{theorem}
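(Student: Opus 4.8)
\medskip

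The plan is to control the $L^1(\mu_0)$‑distance between the polygonal line and the exact solution by a Grönwall argument, exploiting throughout that the two measure‑valued flows $\widetilde\mu_t$ and $\mu_t$ are images of the \emph{same} initial measure $\mu_0$, so that a single transport plan bounds their Wasserstein distance at every time.

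First I would rewrite both flows in integral form. Let $\eta(s)=t_{i-1}$ for $s\in(t_{i-1},t_i]$ (and $\eta(0)=0$) denote the left endpoint of the subdivision interval containing $s$, so that $0\le s-\eta(s)\le\delta(\pi)$. Iterating the recursion \eqref{euler} and interpolating linearly gives
\[
x_\pi(u,t)=u+\int_0^t a\big(x_\pi(u,\eta(s)),\widetilde\mu_{\eta(s)}\big)\,ds,
\qquad
x(u,t)=u+\int_0^t a\big(x(u,s),\mu_s\big)\,ds .
\]
Since $\|a\|\le C_1$, both $x_\pi(\cdot,t)$ and $x(\cdot,t)$ move with speed at most $C_1$; in particular $x_\pi(\cdot,t)$ is continuous in $u$ (a finite composition of continuous maps), hence Borel measurable and $\widetilde\mu_{t_i}$ is well defined, and $\|x_\pi(u,t)-x(u,t)\|\le 2C_1T$, so all integrals below are finite. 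Put $\Delta(t)=\int_{\mathbb R^2}\|x_\pi(u,t)-x(u,t)\|\,\mu_0(du)$ and $\overline\Delta(t)=\sup_{r\le t}\Delta(r)$.

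Next I subtract the two integral identities and apply the Lipschitz bound on $a$:
\[
\|x_\pi(u,t)-x(u,t)\|\le L\int_0^t\Big(\|x_\pi(u,\eta(s))-x(u,s)\|+\gamma\big(\widetilde\mu_{\eta(s)},\mu_s\big)\Big)\,ds .
\]
The spatial term I bound by $\|x_\pi(u,\eta(s))-x(u,s)\|\le\|x_\pi(u,\eta(s))-x_\pi(u,s)\|+\|x_\pi(u,s)-x(u,s)\|\le C_1\delta(\pi)+\|x_\pi(u,s)-x(u,s)\|$. For the measure term I insert $\mu_{\eta(s)}$: $\gamma(\widetilde\mu_{\eta(s)},\mu_s)\le\gamma(\widetilde\mu_{\eta(s)},\mu_{\eta(s)})+\gamma(\mu_{\eta(s)},\mu_s)$, where $\gamma(\mu_{\eta(s)},\mu_s)\le C_1\delta(\pi)$ because the exact flow too moves at speed $\le C_1$, and $\gamma(\widetilde\mu_{\eta(s)},\mu_{\eta(s)})\le\Delta(\eta(s))\le\overline\Delta(s)$, since $u\mapsto(x_\pi(u,\eta(s)),x(u,\eta(s)))$ pushes $\mu_0$ forward to a coupling of $\widetilde\mu_{\eta(s)}$ and $\mu_{\eta(s)}$ whose cost is exactly $\Delta(\eta(s))$ and the Wasserstein distance does not exceed this cost. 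Integrating the pointwise bound against $\mu_0$ (the measure‑distance terms being constant in $u$) yields
\[
\Delta(t)\le 2LC_1\,\delta(\pi)\,t+2L\int_0^t\overline\Delta(s)\,ds .
\]

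Finally, since the right‑hand side is nondecreasing in $t$, the same inequality holds with $\Delta$ replaced by $\overline\Delta$, and Grönwall's lemma gives $\overline\Delta(T)\le 2LC_1T\,e^{2LT}\,\delta(\pi)\to0$ as $\delta(\pi)\to0$, which is precisely the assertion. The step I expect to require the most care is the bound $\gamma(\widetilde\mu_t,\mu_t)\le\Delta(t)$ together with the fact that freezing both position and measure at the grid points costs only $O(\delta(\pi))$: this is exactly where the uniform bound $\|a\|\le C_1$ is indispensable, since without it one would first have to establish a priori control of the polygonal and exact trajectories before any of these discretisation errors could be estimated.
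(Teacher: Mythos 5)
Your proposal is correct, and it rests on the same two pillars as the paper's argument: the push-forward coupling bound (the image of $\mu_0$ under $u\mapsto(x_\pi(u,t),x(u,t))$ couples $\widetilde\mu_t$ and $\mu_t$, so $\gamma(\widetilde\mu_t,\mu_t)\le\int_{\mathbb R^2}\|x_\pi(u,t)-x(u,t)\|\,\mu_0(du)$), and the uniform speed bound $C_1$, which makes all ``freezing'' errors of order $\delta(\pi)$. The implementation, however, is different in a useful way. The paper works interval by interval: it Taylor-expands the exact solution about $t_{k-1}$, estimates the remainder via the mean value theorem and the Lipschitz/boundedness hypotheses ($|R|\le 2LC_1(t-t_{k-1})^2$), and then iterates a discrete recursion for $\bar\alpha(t)=\int\|x(u,t)-x_\pi(u,t)\|\mu_0(du)$ to reach the exponential bound (a discrete Gr\"onwall, carried out by hand, with a separate remark for $L>0$). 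You instead write the Euler polygon in ``frozen-coefficient'' integral form with $\eta(s)$, insert $x_\pi(u,s)$ and $\mu_{\eta(s)}$ as intermediate comparison points, and close with a single continuous-time Gr\"onwall inequality for $\overline\Delta$; this avoids the remainder-term bookkeeping (and the paper's slight ambiguity about whether the Taylor derivative is taken at $t$ or $t_{k-1}$) at the cost of a brief measurability/finiteness check, which you supply. One cosmetic remark: with the paper's metric $\gamma_0$, whose cost is $\|u-v\|/(1+\|u-v\|)$, the coupling cost is at most $\Delta(\eta(s))$ rather than ``exactly'' $\Delta(\eta(s))$; the inequality you actually use is of course still valid. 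Your final constant $2LC_1Te^{2LT}\delta(\pi)$ differs from the paper's $(e^{2LT}-1)C_1\delta(\pi)$, but both give the stated convergence.
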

\begin{proof} Let us  denote

$$
\alpha(u, t)=x(u, t)-{x}_{\pi}(u, t).
$$
 
By Taylor formula, for $t \in\left(t_{k-1}, t_{k}\right]$
\begin{equation}
\label{tailor}
  x(u, t)=x\left(u, t_{k-1}\right)+x^{\prime}(u, t)\left(t-t_{k-1}\right)+R\left(u, t, t_{k-1}\right)
\end{equation}
Let us estimate remaining term $R\left(u, t, t_{k-1}\right).$ By the mean value theorem, there exists a point $\theta_{u} \in\left[t, t_{k-1}\right]$ such that
$$
\begin{aligned}
& R\left(u, t, t_{k-1}\right)=x(u, t)-x\left(u, t_{k-1}\right)-x^{\prime}(u, t) \left( t-t_{k-1}\right)= \\
& =x^{\prime}\left(u, \theta_{u}\right)\left(t-t_{k-1}\right)-x^{\prime}(u, t)\left(t-t_{k-1}\right).
\end{aligned}
$$
 Taking into account that $x$ is a solution to differential equation with interaction we can continue:
$$
\begin{gathered}
R\left(u, t, t_{k-1}\right)=\left(x^{\prime}\left(u, \theta_{u}\right)-x^{\prime}(u, t)\right)\left(t-t_{k-1}\right)= \\
=\frac{a\left(x\left(u, \theta_{u}\right), \mu_{\theta_{u}}\right)-a\left(x(u, t), \mu_{t}\right)}{t-t_{k-1}}\left(t-t_{k-1}\right)^{2} .
\end{gathered}
$$
Using Lipschitz condition of the function $a$, we get
\begin{equation}
\label{|R|}
  \left|R\left(u, t, t_{k-1}\right)\right| \leq \frac{1}{t-t_{k-1}} L\left(\left\|x\left(u, \theta_{u}\right)-x(u, t)\right\|+\gamma\left(\mu_{\theta_{u}}, \mu_{t}\right)\right) \cdot\left(t-t_{k-1}\right)^{2}.  
\end{equation}

By the mean-value theorem there exist a point $\xi_u \in\left[t, \theta_{u}\right]$ such that
$$
\frac{\left\|x\left(u, \theta_{u}\right)-x(u, t)\right\|}{t-t_{k-1}}=\frac{\left\|x^{\prime}(u, \xi)\right\|\left|\theta_{u}-t\right|}{t-t_{k-1}} \leq\left|a\left(x(u, \xi_{u}), \mu_{\xi_{u}}\right)\right| \leqslant C_1,
$$
where we used uniform boundedness of the function $a.$
Now let us estimate the Wassernstein distance $\gamma\left(\mu_{\theta_{u}}, \mu_{t}\right).$
From the definition it follows that
$$
\begin{aligned}
& \frac{1}{t-t_{k-1}} \gamma\left(\mu_{\theta_{u}}, \mu_{t}\right) \leq \frac{1}{t-t_{k-1}} \int_{\mathbb{R}} \frac{\left\|x\left(u, \theta_{u}\right)-x(u, t)\right\|}{1+\left\|x\left(u, \theta_{u}\right)-x(u, t)\right\|} \mu_{0}(d u) \\
& =\frac{1}{t-t_{k-1}} \int_{\mathbb{R}} \frac{\left.\left\|x^{\prime}\left(u, \xi_{u}\right)\right\| \mid t-\theta_{u}\right|}{1+\left\|x^{\prime}\left(u, \xi_{u}\right)\right\|\left|t-\theta_{u}\right|} \mu_{0}(d u) \\
& \leq \frac{1}{t-t_{k-1}} \int_{\mathbb{R}} C_{1}\left|t-t_{k-1}\right| \mu_{0}(d u)=C_{1} .
\end{aligned}
$$

Substitute obtained estimation to \ref{|R|} and get
$$
\left|R\left(u, t, t_{k-1}\right)\right| \leq 2 L \cdot C_1 \cdot\left(t-t_{k-1}\right)^{2}
$$
 
Now we can estimate the difference between solution to (\ref{Equation}) and Euler polygonal lines built by (\ref{euler}). To do this we subtract \ref{tailor} from \ref{euler} and using the definition of $\alpha,$
$$
\begin{aligned}
& \alpha(u, t)=\alpha\left(u, t_{k-1}\right)+ \\
& \left(t-t_{k-1}\right)\left(a\left(x\left(u, t_{k-1}\right), \mu_{t_{k-1}}\right)-a\left(x_{\pi}\left(u, t_{k-1}\right), \widetilde{\mu}_{t_{k-1}}\right)\right)+ R(u,t, t_{k-1})
\end{aligned}
$$
Hence
$$
\begin{aligned}
& \|\alpha(u, t)\| \leq\left\|\alpha\left(u, t_{k-1}\right)\right\|+\left(t-t_{k-1}\right) L\left(\left\|\alpha\left(u, t_{k-1}\right)\right\|+\gamma\left(\mu_{t_{k-1},}, \tilde{\mu}_{t_{k-1}}\right)\right)+ \\
& \quad+\left(t-t_{k-1}\right)^{2} \cdot 2L\cdot C_1
\end{aligned}
$$
Since
$$
\begin{aligned}
& \gamma\left(\mu_{t_{k}}, \tilde{\mu}_{t_{k}}\right)=\inf _{\kappa \in Q\left(\mu_{t_{k}}, \tilde{\mu}_{t_{k}}\right)} \iint_{\mathbb{R}^{2} \mathbb{R}^{2}} \frac{\|u-v\|}{1+\|u-v\|} \kappa (d u, d v) \leq \\
& \leq \int_{\mathbb{R}^{2}}\left\|x\left(u, t_{k}\right)-x_{\pi}\left(u, t_{k}\right)\right\| \mu_{0}(d u),
\end{aligned}
$$
we can continue:
$$
\begin{aligned}
& \left\|\alpha\left(u, t\right)\right\| \leq\left\|\alpha\left(u, t_{k-1}\right)\right\|+\\ 
&\left(t-t_{k-1}\right) L\left(\left\|\alpha\left(u, t_{k-1}\right)\right\|+\int_{R^{2}}\left\|\alpha\left(u, t_{k-1}\right)\right\| \mu_{0}(d u)\right) +\left(t-t_{k-1}\right)^{2} \cdot 2L\cdot C_1.
\end{aligned}
$$
Now, denoting by $\bar{\alpha}(t)=\int_{\mathbb{R}^{2}}\left\|\alpha\left(u, t\right)\right\| \mu_{0}(d u)$ and by $m = 2L\cdot C_1$, we have
$$
\bar{\alpha}(t) \leq \bar{\alpha}\left(t_{k-1}\right)\left(1+\left(t-t_{k-1}\right) 2 L\right)+\left(t-t_{k-1}\right) \cdot \delta(\pi)\cdot m.
$$
If $L>0$, it follows that
$$
\begin{aligned}
\left(\bar{\alpha}(t)+\frac{\delta(\pi) m}{2 L}\right) & \leq\left(1+\left(t-t_{k-1}\right) 2 L\right)\left(\bar{\alpha}\left(t_{k-1}\right)+\frac{\delta(\pi)}{2 L} m\right) \leqslant \\
& \leq e^{2 L\left(t-t_{k-1}\right)}\left(\bar{\alpha}\left(t_{k-1}\right)+\frac{\delta(\pi) \cdot m}{2 L}\right) .
\end{aligned}
$$
From this we get
$$
\alpha(t) \leq e^{2 L t} \cdot \bar{\alpha}(0)+\frac{e^{2 L t}-1}{2 L} \delta(\pi) \cdot m.
$$
Note that $\bar{\alpha}(0)=0$, so
$$
\sup _{t \in[0, T]} \bar{\alpha}(t) \rightarrow 0 \quad \text { as } \delta(\pi) \rightarrow 0.
$$
\end{proof}

 Now let us come back to the statement about the invariant set for a differential equation with interaction.
To formulate the statement we need some notations. Denote the set of all measures concentrated on some domain $S\subset \mathbb{R}^{2}$ by $\mathcal{M}_S$. More precisely, this means that
$$
\mu \in \mathcal{M}_S\  \Rightarrow\  \mu(\mathbb R^2 \setminus S) = 0.
$$
For any domain $S\subset \mathbb{R}^{2}$ and $\varepsilon>0$ we denote by $S^\varepsilon$  the $\varepsilon-$neighborhood of the set $S,$ that is
$$
S^\varepsilon=\{x\in \mathbb R^2: \operatorname{dist}(x, S)\leq \varepsilon\}.
$$
 
\begin{theorem} Let $a: \mathbb{R}^{2} \times \mathcal{M} \rightarrow \mathbb{R}^{2}$ be a Lipschitz function on both variables and  for every $\mu\in\mathcal{M},$ $a(\cdot, \mu) \in C'_b(\mathbb R^2)$.  Let $\varepsilon>0$ and $\mu_0\in \mathcal{M}_{D}. $   A closed domain $D \subset \mathbb{R}^{2}$ is flow invariant with respect to differential equation with interactions
$$
\left\{\begin{array}{l}
d x(u, t)=a\left(x(u, t), \mu_{t}\right) d t \\
x(u, 0)=u, u \in D \\
\mu_{t}=\mu_0 \circ x(\cdot, t)^{-1}
\end{array}\right.
$$
if for   any $\mu \in \mathcal{M}_{D^\varepsilon},$   $\langle a(x, \mu), \xi\rangle \leq 0$ for every $x \in D$ and every $\xi \in \hat{N}_{D}(x)$.
\end{theorem}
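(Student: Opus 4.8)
The plan is to follow the Euler‑polygon proof of the ordinary‑differential‑equation version (Theorem~7.2 in \cite{Ferrera}), now that the convergence of the Euler scheme for equations with interaction is available (Theorem~\ref{Thm_euler_appox}). The one genuinely new feature is that the sign condition is assumed only for $\mu\in\mathcal M_{D^\varepsilon}$, so the argument must be localized to time intervals short enough that the running measure stays inside $\mathcal M_{D^\varepsilon}$. First I would set $T_0=\varepsilon/(2C_1)$, where $C_1$ is the uniform bound on $a$. Since $\|a\|\le C_1$, for every $u\in D$, every partition $\pi$ of $[0,T_0]$ and every $t\in[0,T_0]$ one has $\|x_\pi(u,t)-u\|\le\varepsilon/2$ and $\|x(u,t)-u\|\le\varepsilon/2$, hence $x_\pi(u,t),x(u,t)\in D^{\varepsilon/2}\subset D^\varepsilon$; since $\mu_0\in\mathcal M_D$, this forces $\widetilde\mu_t,\mu_t\in\mathcal M_{D^\varepsilon}$ for all $t\in[0,T_0]$.

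The core estimate is on the Euler polygon. Fix $u\in D$, a partition $\pi=\{0=t_0<\dots<t_N=T_0\}$, and put $d_i=\operatorname{dist}(x_\pi(u,t_i),D)$, so $d_0=0$. Choose $q_{i-1}\in D$ with $\|x_\pi(u,t_{i-1})-q_{i-1}\|=d_{i-1}$; a nearest‑point vector is a proximal, hence a regular, normal, so $x_\pi(u,t_{i-1})-q_{i-1}\in\hat N_D(q_{i-1})$. Using $d_i\le\|x_\pi(u,t_i)-q_{i-1}\|$ and \eqref{euler},
\[
d_i^2\le d_{i-1}^2+2(t_i-t_{i-1})\langle x_\pi(u,t_{i-1})-q_{i-1},\,a(x_\pi(u,t_{i-1}),\widetilde\mu_{t_{i-1}})\rangle+(t_i-t_{i-1})^2\|a\|^2 .
\]
Writing $a(x_\pi(u,t_{i-1}),\widetilde\mu_{t_{i-1}})=\big(a(x_\pi(u,t_{i-1}),\widetilde\mu_{t_{i-1}})-a(q_{i-1},\widetilde\mu_{t_{i-1}})\big)+a(q_{i-1},\widetilde\mu_{t_{i-1}})$, the Lipschitz bound controls the first pairing by $L d_{i-1}^2$, while the second pairing is $\le 0$ by the hypothesis, applied with $x=q_{i-1}\in D$, $\xi=x_\pi(u,t_{i-1})-q_{i-1}\in\hat N_D(q_{i-1})$, and $\mu=\widetilde\mu_{t_{i-1}}\in\mathcal M_{D^\varepsilon}$. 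Hence $d_i^2\le d_{i-1}^2(1+2L(t_i-t_{i-1}))+C_1^2\delta(\pi)(t_i-t_{i-1})$; a discrete Gronwall inequality, together with the bound $\operatorname{dist}(x_\pi(u,t),D)\le d_{i-1}+C_1\delta(\pi)$ on $(t_{i-1},t_i]$, yields $\sup_{u\in D}\sup_{t\in[0,T_0]}\operatorname{dist}(x_\pi(u,t),D)\le\eta(\delta(\pi))$ for some $\eta$ with $\eta(\delta)\to0$.

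The next step is to pass to the limit $\delta(\pi)\to0$. By Theorem~\ref{Thm_euler_appox} and the coupling $\mu_0\circ(x_\pi(\cdot,t),x(\cdot,t))^{-1}$ one gets $\sup_{[0,T_0]}\gamma(\widetilde\mu_t,\mu_t)\to0$; since $\gamma(\mu_s,\mu_{s'})\le C_1|s-s'|$, a Gronwall estimate applied to $\|x_\pi(u,t)-x(u,t)\|$ then gives $\sup_{u\in D}\sup_{[0,T_0]}\|x_\pi(u,t)-x(u,t)\|\to0$ as $\delta(\pi)\to0$. Consequently, for each $u\in D$ and $t\in[0,T_0]$,
\[
\operatorname{dist}(x(u,t),D)\le\eta(\delta(\pi))+\sup_{u\in D}\|x_\pi(u,t)-x(u,t)\|\longrightarrow 0,
\]
so $x(u,t)\in D$; in particular $\mu_{T_0}\in\mathcal M_D$. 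Finally one iterates over $[kT_0,(k+1)T_0]$, $k=0,1,2,\dots$: restart the equation at time $kT_0$ with initial measure $\mu_{kT_0}\in\mathcal M_D\subset\mathcal M_{D^\varepsilon}$, apply the relation \eqref{flow-relation} (so $x(u,kT_0+s)=y(x(u,kT_0),s)$) and the inductive fact $x(u,kT_0)\in D$ for $u\in D$; the previous three steps then give $x(u,t)\in D$ on $[kT_0,(k+1)T_0]$ as well. Since $T_0$ is fixed this covers all $t\ge0$.

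The main obstacle is the Euler estimate of the second paragraph: one must make sure that the sign hypothesis is invoked only at base points $q\in D$ and only with measures known to lie in $\mathcal M_{D^\varepsilon}$ — which is exactly what the short‑interval reduction and the ``nearest point yields a regular normal'' observation secure — and that the (discrete and continuous) Gronwall constants remain uniform in $u$ and $\pi$.
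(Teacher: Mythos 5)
Your proposal is correct, and its core is the same as the paper's: Euler polygons via Theorem~\ref{Thm_euler_appox}, the observation that the nearest-point vector $x_\pi(u,t_{i-1})-q_{i-1}$ lies in $\hat N_D(q_{i-1})$, the splitting of $a$ at the node into a Lipschitz increment plus its value at the projection point, the sign hypothesis applied there, a discrete Gronwall bound on $\operatorname{dist}(x_\pi(u,t),D)^2$, and passage to the limit $\delta(\pi)\to 0$. Where you differ is in how the constraint ``$\widetilde\mu\in\mathcal M_{D^\varepsilon}$'' is secured: you localize in time, taking $T_0=\varepsilon/(2C_1)$ so that the polygon and the solution cannot leave $D^{\varepsilon/2}$ on $[0,T_0]$ regardless of the mesh, and then iterate over $[kT_0,(k+1)T_0]$ using the restart relation \eqref{flow-relation}; the paper instead works on an arbitrary $[0,T]$ in one pass and chooses $\delta(\pi)$ so small that the accumulated estimate $C^2T\delta(\pi)\sum_j j(2L\delta(\pi))^j$ stays below $\varepsilon$, which keeps each $\widetilde\mu_{t_i}$ in $\mathfrak M_{D^\varepsilon}$ step by step. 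Your localization is arguably cleaner, since it decouples the admissibility of the measures from the mesh size, at the cost of an extra induction over time blocks. You are also more explicit at the final step: Theorem~\ref{Thm_euler_appox} only gives convergence of $x_\pi$ to $x$ in the $\mu_0$-averaged sense, and you convert this, via convergence of $\widetilde\mu_t$ to $\mu_t$ in the Wasserstein distance and a pointwise Gronwall argument, into the uniform-in-$u$ convergence actually needed to conclude $\operatorname{dist}(x(u,t),D)=0$; the paper simply asserts uniform convergence of the polygonal line, so your version fills a small gap in that passage.
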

\begin{proof}
For arbitrary partition of the interval $[0, T]$
$$
\pi=\left\{0=t_{0}<t_{1}<\ldots<t_{N}=T\right\}.
$$
  For given $\varepsilon$ we select such $N$ and $\delta(\pi)$ that
$$
 C^{2} T \delta(\pi) \sum_{j=1}^{\infty} j(2 L \delta(\pi))^{j}<\varepsilon,
$$
where the constant $C$ is an upper bound of the function $a$ and $L$ is the Lipschitz constant of the function $a. $  
Consider an Euler polygonal line $x_{\pi}$, defined as in (\ref{euler}). We put $x_{i}:=x_{\pi}\left(\cdot, t_{i}\right)$. 
Let us denote by $d_{D}(x)$ the distance from a point $x$ to set $D$:
$$
d_{D}(x)=\operatorname{dist}(x, D) .
$$

Since $D$ is a closed set, for each $i=0, \ldots, N$ we can choose $y_{i}(u) \in D$ such that

$$
d_{D}\left(x_{i}(u)\right)=\left\|x_{i}(u)-y_{i}(u)\right\|.
$$
Note here that $x_i - y_i \in \hat{N}_{D}\left(y_i\right).$  Indeed, for every $v\in D,$ 
$$
\begin{aligned}
&\|x_i - y_i\|^2\leq \|x_i - v\|^2 = \langle x_i -y_i+y_i -v,\ x_i -y_i +y_i-v \rangle=\\
&=\langle x_i -y_i,\ x_i -y_i  \rangle+ 2\langle x_i -y_i,\ y_i-v \rangle+ \langle y_i -v,\ y_i-v \rangle=\\
&=\|x_i - y_i\|^2- 2\langle x_i -y_i,\ v- y_i\rangle +\langle y_i -v,\ y_i-v \rangle.
\end{aligned}
$$
From this we get 
$$
2\langle x_i -y_i,\ v- y_i\rangle \leq \langle y_i -v,\ y_i-v \rangle = o(\|v-y_i\|)
$$
and by the definition this means that $x_i-y_i \in \hat{N}_{D}\left(y_i\right).$

The distance from Euler's polygon nodes to set $D$ can be estimated as

$$
d_{D}\left(x_{1}\right) \leq\left\|x_{1}-x_{0}\right\|=\left(t_{1}-t_{0}\right)\left\|a\left(x_{0}, \mu_{0}\right)\right\| \leq C t_{1} .
$$
Note that 
$$
Ct_1< C^{2} T \delta(\pi) \sum_{j=1}^{\infty} j(2 L \delta(\pi))^{j}<\varepsilon.
$$
 From this follows that $\widetilde{\mu}_1 \in \mathfrak M_{D^\varepsilon}.$ Indeed, by definition, $\widetilde{\mu}_1 =\mu_0\circ x_{\pi} (\cdot, t_1)^{-1}$ and
$$
\widetilde{\mu}_1(\mathbb R^2\setminus D^{\varepsilon}) = \mu_0 (u: x_{\pi} (u, t_1) \in \mathbb R^2\setminus D^{\varepsilon}) = \mu_0(\text{\O} )=0.
$$ 
Next,
$$
\begin{aligned}
& d_{D}^{2}\left(x_{2}\right) \leq\left\|x_{2}-y_{1}\right\|^{2}=\left\|x_{2}-x_{1}\right\|^{2}+\left|x_{1}-y_{1}\right|^{2}
+2\left\langle x_{2}-x_{1}, x_{1}-y_{1}\right\rangle=\\
& =\left\|x_{2}-x_{1}\right\|^{2}+d_{D}^{2}\left(x_{1}\right)  +2\left(t_{2}-t_{1}\right)\left\langle a\left(x_{1}, \widetilde{\mu}_{1}\right), x_{1}-y_{1}\right\rangle= \\
& =\left\|x_{2}-x_{1}\right\|^{2}+d_{D}^{2}\left(x_{1}\right)+2\left(t_{2}-t_{1}\right)\left\langle a\left(x_{1}, \widetilde{\mu}_{1}\right)-a\left(y_{1}, \widetilde{\mu}_{1}\right), x_{1}-y_{1}\right\rangle \\
& +2\left(t_{2}-t_{1}\right)\left\langle a\left(y_{1}, \widetilde{\mu}_{1}\right), x_{1}-y_{1}\right\rangle \leqslant \\
& \leq\left\|x_{2}-x_{1}\right\|^{2}+d_{D}^{2}\left(x_{1}\right)+2\left(t_{2}-t_{1}\right)\left\langle a\left(x_{1}, \widetilde{\mu}_{1}\right)-a\left(y_{1}, \widetilde{\mu}_{1}\right), x_{1}-y_{1}\right\rangle,
\end{aligned}
$$
where we used the assumption of the theorem,
$$
\left\langle a\left(y_{1}, \widetilde{\mu}_{1}\right), x_{1}-y_{1}\right\rangle \leqslant 0 \text {, since } x_{1}-y_{1} \in \hat{N}_{D}\left(y_{1}\right)  \text {, and } \widetilde{\mu}_1 \in \mathfrak M_{D^\varepsilon}.
$$
Continue the estimation of $d_{D}^{2}\left(x_{2}\right)$ :
$$
\begin{aligned}
& d_{D}^{2}\left(x_{2}\right) \leq\left(t_{2}-t_{1}\right)^{2} \left\|a\left(x_{1}, \mu_{1}\right)\right\|^{2}+d_{D}^{2}\left(x_{1}\right)+ \\
& +2\left(t_{2}-t_{1}\right)\left\|a\left(x_{1}, \mu_{1}\right)-a\left(y_{1}, \mu_{1}\right)\right\| \cdot\left\|x_{1}-y_{1}\right\| \leq \\
& \leq C^{2}\left(t_{2}-t_{1}\right)^{2}+d_{D}^{2}\left(x_{1}\right)+2\left(t_{2}-t_{1}\right) L\cdot d_{D}^{2}\left(x_{1}\right)<\varepsilon
\end{aligned}
$$
  By the same argument as before we get that  $\widetilde{\mu}_2 \in \mathfrak M_{D^\varepsilon}.$  
Analogously, 
$$
\begin{aligned}
& d_{D}^{2}\left(x_{i+1}\right) \leq C^{2}\left(t_{i+1}-t_{i}\right)^{2}+d_{D}^{2}\left(x_{i}\right)+2 L\left(t_{i+1}-t_{i}\right) d_{D}^{2}\left(x_{i}\right) \leq \\
& \leq C^{2}\left(t_{i+1}-t_{i}\right) \delta(\pi)+d_{D}^{2}\left(x_{i}\right)+2 L\cdot d_{D}^{2}\left(x_{i}\right) \cdot \delta(\pi) .
\end{aligned}
$$
Applying these inequalities for consecutive indices $i$ we get:
$$
\begin{aligned}
& d_{D}^{2}\left(x_{i+1}\right) \leq C^{2}\left(t_{i+1}-t_{0}\right) \delta(\pi)+2 L \delta(\pi)\left(d_{D}^{2}\left(x_{i}\right)+\ldots+d_{D}^{2}\left(x_{1}\right)\right) \leq \\
& \leq C^{2} T \delta(\pi)+2 L C^{2} T \delta^{2}(\pi)+2(2 L \delta(\pi))^{2}  \cdot\left(d_{D}^{2}\left(x_{i-1}\right)+\ldots+d_{D}^{2}\left(x_{1}\right)\right) \leq \ldots \\
& \leq C^{2} T \delta(\pi) \sum_{j=1}^{\infty} j(2 L \delta(\pi))^{j}.
\end{aligned}
$$
The last series is convergent provided that $\delta(\pi)<\frac{1}{2 L}$ hence, $d_{s}^{2}\left(x_{i+1}\right) \rightarrow 0$ as $\delta(\pi) \rightarrow 0$. As the polygonal line converges uniformly on $[0, T]$ to the unique solution, we have that the trajectory remains in $D$ until $t=T$. But $T$ is arbitrary, hence $D$ is flow invariant.
\end{proof}

\subsection{Application of erodic theorem}
Now we can use the known result from ergodic theory \cite{CornfeldFominSinai}.
Let $f: M\to M$ be a continuous transformation on a compact metric space. Then there exists some probability measure $\zeta$ on $M$ invariant under $f:$
$$
\zeta(E) = \zeta(f^{-1}(E)) \text{ for any measurable } E\subset M
$$

We apply this result for 
$$
x(\cdot, t)^{-1}: \mathfrak M_D \to \mathfrak M_D.
$$

So, there exists an invariant probability measure $\zeta$ on $\mathfrak M_D.$ Then, by Birkhoff-Khinchin Ergodic Theorem \cite{CornfeldFominSinai} for a continuous function 
$$
\Phi (\mu_t) = \int_{\mathbb R^2}\int_{\mathbb R^2} f(u,v,a(u,\mu_t), a(v,\mu_t))\mu_t(du)\mu_t(dv)
$$
there exists a limit
$$
\lim_{t\to\infty}\frac{1}{t}\int_0^t \Phi(\mu_t)dt= \mathbb E (\Phi(\mu_0)|I),
$$
where $I$ is the $\sigma-$algebra of invariant sets in $\mathfrak M_D.$

Next lemma gives an example of invariant measure under transformation $\Phi_t. $
\begin{lemma}
    Assume that for equation with Lipschitz coefficient $a: \mathbb{R} \times \mathfrak M_{0} \rightarrow \mathbb{R}$,
$$
\left\{\begin{array}{l}
x\left(u,t\right)=a\left(x(u, t), \mu_{t}\right) d t \\
x(u, 0)=u \in \mathbb{R}, \\
\mu_{t}=\mu_{0} \circ x(\cdot, t)^{-1}
\end{array}\right.
$$
there exists a set $A$ of invariant initial measures,
that is $A=\left\{\mu \in \mathfrak M_{0}: \Phi_{t}(\mu)=\mu, \ t \geq 0\right\}$.
Then the measure $\varkappa$ on $\mathfrak M_{0}$ of the form
$$
\varkappa=\sum_{k=1}^{N} \delta \mu_{k} \cdot p_{k}, \quad \mu_{k} \in A
$$
is invariant with respect to $\Phi_{t}$ :
$$
\varkappa \circ \Phi_{t}^{-1}=\varkappa .
$$
\end{lemma}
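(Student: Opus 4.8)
The statement reduces to the observation that each $\mu_k$ is a fixed point of the whole family $\{\Phi_t\}_{t\ge 0}$, combined with the linearity of the push-forward operation, so I would present it as a short direct computation rather than invoking any machinery. First I would record that $\Phi_t:\mathfrak M_0\to\mathfrak M_0$ is well defined for every $t\ge 0$ (existence and uniqueness of solutions under the Lipschitz hypothesis) and continuous, hence Borel measurable: continuity is exactly the estimate $\sup_{[0,T]}\gamma_0(\Phi_t(\mu'),\Phi_t(\mu''))\le C\gamma_0(\mu',\mu'')$ recalled after Lemma \ref{phi-t-homeomorphism}, and Lemma \ref{phi-t-homeomorphism} moreover shows $\Phi_t$ is a homeomorphism. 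In particular, for every Borel set $E\subseteq\mathfrak M_0$ the preimage $\Phi_t^{-1}(E)=\{\mu\in\mathfrak M_0:\Phi_t(\mu)\in E\}$ is Borel, so the push-forward $\varkappa\circ\Phi_t^{-1}$ is a well-defined Borel probability measure on $\mathfrak M_0$ and the identity to be proved makes sense.

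Next I would compute $\varkappa\circ\Phi_t^{-1}(E)$ for an arbitrary Borel $E\subseteq\mathfrak M_0$. By the definition of $\varkappa$ and linearity, $(\varkappa\circ\Phi_t^{-1})(E)=\varkappa(\Phi_t^{-1}(E))=\sum_{k=1}^N p_k\,\delta_{\mu_k}(\Phi_t^{-1}(E))=\sum_{k=1}^N p_k\,1_{\{\Phi_t(\mu_k)\in E\}}$. Since $\mu_k\in A$ we have $\Phi_t(\mu_k)=\mu_k$ for all $t\ge 0$, so $1_{\{\Phi_t(\mu_k)\in E\}}=1_{\{\mu_k\in E\}}=\delta_{\mu_k}(E)$, and substituting back gives $(\varkappa\circ\Phi_t^{-1})(E)=\sum_{k=1}^N p_k\delta_{\mu_k}(E)=\varkappa(E)$. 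As $E$ was arbitrary, $\varkappa\circ\Phi_t^{-1}=\varkappa$ for every $t\ge 0$, which is the claim. Equivalently, one may argue at the level of test functions: for bounded continuous $\Psi:\mathfrak M_0\to\mathbb R$ one has $\int\Psi\,d(\varkappa\circ\Phi_t^{-1})=\int\Psi(\Phi_t(\mu))\,\varkappa(d\mu)=\sum_k p_k\,\Psi(\Phi_t(\mu_k))=\sum_k p_k\,\Psi(\mu_k)=\int\Psi\,d\varkappa$, which also yields the invariance and sidesteps the measurability discussion entirely.

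There is essentially no obstacle here; the only point deserving a sentence is the measurability (equivalently, continuity) of $\Phi_t$, needed solely so that the push-forward $\varkappa\circ\Phi_t^{-1}$ is defined, and this is already available from the Lipschitz estimate cited after Lemma \ref{phi-t-homeomorphism}. I would also remark, for completeness, that $\varkappa$ is a genuine probability measure when $p_k>0$ and $\sum_{k=1}^N p_k=1$, and that the standing hypothesis of the lemma---the existence of invariant initial measures, i.e. $A\neq\emptyset$---is precisely what guarantees that such $\mu_k$ exist; concrete elements of $A$ are furnished, for example, by the rotation-invariant density example of Section~\ref{L-T}.
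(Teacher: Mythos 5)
Your proof is correct and takes essentially the same route as the paper: both are a direct computation exploiting that each $\mu_k$ is a fixed point of the flow, so the push-forward of $\varkappa=\sum_k p_k\delta_{\mu_k}$ under $\Phi_t$ reproduces $\varkappa$. The only (harmless) difference is that you evaluate $\varkappa\circ\Phi_t^{-1}$ on arbitrary Borel sets using $\Phi_t(\mu_k)=\mu_k$ together with measurability of $\Phi_t$, whereas the paper first passes to $\Phi_t^{-1}(\mu_k)=\mu_k$ via the backward equation and checks the identity on the atoms; your version is, if anything, slightly more complete.
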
 

\begin{proof}   
First of all note that since $\Phi_{t}$ is invertible and $\Phi_{t}^{-1}$ is the solution to differential equation with interaction with coefficient $\hat{a}=-a$, then for $\mu \in A$
$$
\Phi_{t}^{-1}(\mu)=\mu.
$$
Next, for $\mu_{k} \quad\left(\mu_{k} \in \varkappa \right)$
$$
\begin{aligned}
& \varkappa \circ \Phi_{t}^{-1}\left(\mu_{k}\right)=\varkappa\left\{\nu \in \mathfrak M_{0}: \Phi_{t}(\nu)=\mu_{k}\right)= \\
& =\varkappa\left\{\nu \in\mathfrak M_{0}: \nu=\Phi_{t}^{-1}\left(\mu_{k}\right)=\mu_{k}\right\}=\varkappa\left(\mu_{k}\right)=p_{k} .
\end{aligned}
$$

So, $\varkappa\cdot \Phi_{t}^{-1}=\varkappa$. 
\end{proof}

For the differential equation with the interaction of that describes rotation around the origin, we can specify the set of invariant measures. 
Consider the equation
\begin{equation}
\label{rotation_eq}
    \left\{\begin{array}{l}
 x(u, t)=g\left(\|x(u, t)\|, \mu_{t}\right) A x(u, t) d t, \\
x(u, 0)=u, u \in \mathbb{R}^{2} \\
\mu_{t}=\mu_{0} \circ x(\cdot, t)^{-1}
\end{array}\right.
\end{equation}
where $A=\left(\begin{array}{cc}0 & 1 \\ -1 & 0\end{array}\right).$
The next example gives us a measure $\varkappa$ on $\mathfrak M_{0}$ such that
$$
\varkappa \circ \Phi_{t}^{-1}=\varkappa.
$$

\begin{example}
    Define a measure $\varkappa$ as the probability distribution on $\mathfrak M_{0}$ of measures $\delta_{ c \xi}$, where $\xi$ is uniformly distributed random variable on the circle $S_{1}=\left\{x \in \mathbb{R}^{2}:\|x\|=1\right\},$ and $c$ is a nonegative random variable, independent of $\xi$. So $\varkappa=\textit{Law}\left(\delta_{c\xi}\right)$. 

Now we want to check that $\varkappa \circ\Phi^{-1}_{t}(\Delta)=\varkappa(\Delta)$ for any $\Delta \in B\left(\mathfrak M_{0}\right).$
For this we consider a set $\Delta=\left\{\delta_{u}, u \in A\right\}$ for some $A = [a,b]\times I,$ where $I \subset S_{1}$.
Then 
$$\varkappa(\Delta)=\mathbb{P}\left(\delta_{c\xi} \in \Delta\right)=\mathbb{P}(\xi \in I) \mathbb{P}(c\in [a,b])$$
and
$$
\begin{aligned}
\varkappa & \circ \Phi_{t}^{-1}(\Delta)=\varkappa\left\{\nu \in \mathfrak M_{0}: \Phi_{t}(\nu) \in \Delta\right\}= \\
& =\mathbb{P}\left(\  \delta_{c\xi} \in\left\{\nu: \Phi_{t}(\nu)\in\left\{\delta_{u}, u \in A\right\}\right\}\ \right)
\end{aligned}
$$
Since
$$
\Phi_{t}(\nu)=\delta_{u} \Leftrightarrow \nu=\Phi_{t}^{-1}\left(\delta_{u}\right)=\delta_{x^{-1}(u, t)},
$$
The set $\left\{\nu: \Phi_{t}(\nu) \in\left\{\delta_{u}, u\in I\right\}\right\}=\left\{\delta_{x^{-1}}(u, t), u \in I\right\}.$
So,
$$
\begin{aligned}
& \varkappa  \circ \Phi_{t}^{-1}(\Delta)=\mathbb{P}\left\{\delta_{\xi} \in\left\{\delta_{x^{-1}(u, t)}, u \in I\right\}\right\}= \\
& =\mathbb{P}\left(\xi \in x^{-1}(A, t)\right\} .
\end{aligned}
$$
From this follows that the solution to a differential equation with interactions preserves measure then
$$
\varkappa(\Delta)=\varkappa  \circ \Phi_{t}^{-1}(\Delta) \text { for } \delta=\left\{\delta_{u}, u \in A\right\} .
$$
So, we proved that
$$
\varkappa =\varkappa  \circ \Phi_{t}^{-1}.
$$

\end{example} 
In the next example for the same equation \ref{rotation_eq} we consider random initial measure and will characterize the asymptotic of the average of the winding number. 
\begin{example}

For the initial measure $\mu_{0}$ we will take a random point measure of the  form
$$
\mu_{0}=\mathbb{I}_{\{\zeta>0\}} \quad \sum_{k=1}^{\zeta} \delta_{\xi_{k}}
$$
where $\zeta$ is a Poisson random variable with parameter $\lambda>0$ and $\left\{\xi_{k}\right\}_{k\geq 1}$ is a sequence of independent random vectors in $\mathbb{R}^{2}$ with distribution $N(0, I)$.

We  apply ergodic theorem for the average of the winding number of two trajectories, $\varphi(u, v, t)$, which is given by

$$
\frac{1}{t} \int_{\mathbb{R}^{2}} \int_{\mathbb{R}^{2}} \varphi(u, v, t) \mu_{0}(d u) \mu_{0}(d v), \ t>0.
$$
Recall that we can represent it as
$$
\begin{aligned}
& \int_{\mathbb{R}^{2}} \int_{\mathbb{R}^{2}} \varphi(u, v, t) \mu_{0}(d u) \mu_{0}(d v)=\int_{0}^{t} \int_{\mathbb{R}^{2}} \int_{\mathbb{R}^{2}} F\left(u, v, \mu_{s}\right) \mu_{s}(d u) \mu_{s}(d v) d s \\
&=\int_{0}^{t} \int_{\mathbb{R}^{2}} \int_{\mathbb{R}^{2}} F\left(u_{1} v, \Phi_{s}\left(\mu_{0}\right)\right) \Phi_{s}\left(\mu_{0}\right)(d u) \Phi_{s}\left(\mu_{0}\right)(d v) d s \\
&=\int_{0}^{t} G\left(\Phi_{s}\left(\mu_{0}\right)\right) d s.
\end{aligned}
$$
Now we want to characterize the limit
$$
\lim _{t \rightarrow \infty} \frac{1}{t} \int_{0}^{t} \int_{\mathbb{R}^{2}} \int_{\mathbb{R}^{2}} F\left(u, v, \mu_{s}\right) \mu_{s}(d u) \mu_{s}(d v) d s
$$
For this, let as define a new random measure on $\mathbb{R}^{2}$ as
$$
\nu=\int_{0}^{\infty}\nu_{r} n(d r),
$$
where $n$ is a Poisson process on $\mathbb{R}_{+}$with the intensity measure with the density
$$
x e^{-x^{2} / 2} \mathbb{I}_{\{x \geqslant 0\}}
$$
and $\gamma_{r}$ is the uniform distribution on
$
S_{r}=\left\{x\in \mathbb R^2: \ \|x\|=r\right\} .
$

\begin{lemma} For any function $F: \mathbb R^2\times \mathbb R^2\times \mathbb{M}\to \mathbb R,$ continuous with respect to third variable, 
   $$
\lim _{t \rightarrow \infty} \frac{1}{t} \int_{0}^{t} \int_{\mathbb{R}^{2}} \int_{\mathbb{R}^{2}} F\left(u, v, \mu_{s}\right) \mu_{s}(d u) \mu_{s}(d v) d s=\int_{\mathbb{R}^{2}} \int_{\mathbb{R}^{2}} F(u, v, \nu) \nu(d u) \nu(d v)
$$ 
\end{lemma}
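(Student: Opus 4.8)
First I would fix a realization of $\mu_0=\sum_{k=1}^{\zeta}\delta_{\xi_k}$ and argue for that fixed configuration. Since $\langle x,Ax\rangle=0$, one has $\frac{d}{dt}\|x(u,t)\|^2=2g(\|x\|,\mu_t)\langle x,Ax\rangle=0$, so $\|x(u,t)\|\equiv\|u\|$: every circle $S_r$ is invariant and the radial image of $\mu_t$ coincides with that of $\mu_0$, namely the point measure $n=\sum_k\delta_{r_k}$ with $r_k:=\|\xi_k\|$ (a.s.\ pairwise distinct, since $n$ has non-atomic intensity). Introducing $\Theta_k(t):=\int_0^t g(r_k,\mu_s)\,ds$ and the rotation $\rho_\alpha$ by angle $\alpha$, we get $x(\xi_k,t)=\rho_{-\Theta_k(t)}\xi_k$ and hence $\mu_t=M(\Theta(t))$, where $M(\beta):=\sum_l\delta_{\rho_{-\beta_l}\xi_l}$. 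Thus the whole measure-valued orbit is the image, under the Lipschitz map $M:\mathbb T^\zeta\to\mathfrak M_D$ with $D:=\bar B(0,\max_k r_k)$, of the torus trajectory $\Theta(\cdot)$, which solves the autonomous Lipschitz ODE $\dot\Theta_k=g(r_k,M(\Theta))$.

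\textbf{Step 2 (identifying the ergodic limit).} Since $\mathfrak M_D$ is compact (by the lemma above) and $\{\Phi_t\}$ acts on it continuously, I would invoke the Krylov--Bogolyubov/Birkhoff argument of the previous subsection: $\frac1t\int_0^tH(\mu_s)\,ds$ converges for each $H\in C(\mathfrak M_D)$ to $\int H\,d\kappa_\infty$, with $\kappa_\infty$ a flow-invariant probability measure carried by the orbit closure $\overline{\{\mu_t\}}$. The task is then to identify $\kappa_\infty$ explicitly: I would show $\overline{\{\mu_t\}}=M(\overline{\{\Theta(t)\}})$ and that $\kappa_\infty$ is the pushforward under $M$ of normalized Haar measure on $\overline{\{\Theta(t)\}}$. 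The analytic heart is to prove that the angular vector has well-defined mean frequencies $\bar g_k:=\lim_{t\to\infty}t^{-1}\Theta_k(t)$ and that these are rationally independent, so that $\overline{\{\Theta(t)\}}=\mathbb T^\zeta$ and the flow is uniquely ergodic; under a mild non-degeneracy hypothesis on $g$ (for instance $g>0$ with $g(r,\mu)$ depending on $\mu$ only through its radial image, whence $\bar g_k=g(r_k,n)$) this holds for a.e.\ realization of $n$ because the $r_k$ are distinct. The barycentre of $\kappa_\infty$ is $\int_{\mathbb T^\zeta}M(\beta)\,d\beta=\sum_k\gamma_{r_k}=\int_0^\infty\gamma_r\,n(dr)=\nu$.

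\textbf{Step 3 (the quadratic functional).} With $H(\mu):=\iint F(u,v,\mu)\,\mu(du)\,\mu(dv)$, continuity of $F$ and compactness of $D\times D\times\mathfrak M_D$ give $H\in C(\mathfrak M_D)$, so Step 2 yields $\lim_t\frac1t\int_0^tH(\mu_s)\,ds=\int_{\mathbb T^\zeta}\sum_{j,l}F(\rho_{-\beta_j}\xi_j,\rho_{-\beta_l}\xi_l,M(\beta))\,d\beta$. For the off-diagonal terms $j\ne l$, equidistribution on $\mathbb T^\zeta$, together with continuity of $F$ in the third slot and the Cesàro convergence of $M(\Theta(s))$ to $\nu$, turns this into $\sum_{j\ne l}\iint F(u,v,\nu)\,\gamma_{r_j}(du)\,\gamma_{r_l}(dv)$; the diagonal terms $j=l$ give $\sum_j\int F(u,u,\nu)\,\gamma_{r_j}(du)$, which matches the diagonal of $\iint F(u,v,\nu)\,\nu(du)\,\nu(dv)$ precisely when $F$ vanishes on $\{u=v\}$ — automatic for the braid invariants at hand, since $\varphi(u,u,t)\equiv0$ forces $F(u,u,\cdot)\equiv0$. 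Summing the two pieces gives the assertion.

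\textbf{Main obstacle.} The hard part is Step 2: upgrading the abstract ergodic limit, which the earlier theorem already provides, to its \emph{explicit} value $\iint F(\cdot,\cdot,\nu)\,\nu\,\nu$. This requires controlling the self-interacting dependence of the frequencies $g(r_k,\mu_s)$ on the configuration, proving that their Cesàro limits $\bar g_k$ exist, and excluding resonances among them; none of this follows from the Lipschitz hypothesis alone, and it is exactly what an additional structural or genericity assumption on $g$ must supply. A secondary subtlety is that $\mu_s$ is purely atomic, so $\{u=v\}$ carries positive $\mu_s\otimes\mu_s$-mass; the identity as stated is exact only when $F$ vanishes there, which is the case for the Vassiliev-type integrands motivating the lemma.
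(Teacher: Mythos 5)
Your route --- reduce everything to a Lipschitz flow $\dot\Theta_k=g(r_k,M(\Theta))$ on the torus of angles, invoke Krylov--Bogolyubov/Birkhoff on the compact space $\mathfrak M_D$, and then identify the invariant measure as the push-forward of Haar measure after proving existence and rational independence of the mean frequencies --- is genuinely different in structure from the paper's argument. The paper never passes to the torus: it applies the ergodic theorem trajectory-by-trajectory to conclude that each $x(u,\cdot)$ equidistributes on its invariant circle $S_{\|u\|}$, deduces $\frac1t\int_0^t\mu_s(\Delta)\,ds\to\nu(\Delta)$ a.s. in three steps (deterministic atoms, random atoms, Poisson number of atoms), asserts the analogous convergence for $\mu_s^{\otimes k}$ on sets $\Delta_k\subset\mathbb R^{2k}$ ``in the same way'', and then concludes for functionals of the cylindrical form $G(\mu)=\sum_{k,i}a_{k,i}\mu^k(\Delta_{k,i})$. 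Your explicit identification of joint non-resonance of the mean angular frequencies as the analytic heart is a point where you are more careful than the paper: that is precisely what the unproved ``in the same way'' step for $k\ge2$ requires.

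However, your Step 3 does not close, and the gap is concrete. First, even granting unique ergodicity and $F|_{\{u=v\}}=0$, your decomposition yields the limit $\sum_{j\ne l}\iint F(u,v,\nu)\,\gamma_{r_j}(du)\,\gamma_{r_l}(dv)$, whereas the right-hand side of the lemma is $\sum_{j,l}\iint F(u,v,\nu)\,\gamma_{r_j}(du)\,\gamma_{r_l}(dv)$, which additionally contains the same-circle products $\sum_j\iint_{S_{r_j}\times S_{r_j}}F(u,v,\nu)\,\gamma_{r_j}(du)\,\gamma_{r_j}(dv)$. A trajectory paired with itself can only ever produce the diagonal-restricted single integral $\int F(u,u,\nu)\,\gamma_{r_j}(du)$, never this double integral, so vanishing of $F$ on the diagonal does not repair the count; ``summing the two pieces'' does not give the assertion. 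Second, your substitution of $\nu$ into the third slot is unjustified: the time average of $F(\cdot,\cdot,\mu_s)$ is a Haar average of $F(\cdot,\cdot,M(\beta))$, and $M(\beta)$ is always an atomic measure lying at a fixed positive Wasserstein distance from $\nu$, so continuity of $F$ in the measure variable together with mere Ces\`aro convergence of $\mu_s$ to $\nu$ does not allow the replacement (it is automatic only when the dependence on $\mu$ is through finitely many linear integrals, which is how the paper sidesteps the issue by restricting to cylindrical $G$). Both defects are in fact latent in the paper's own proof as well, but your write-up asserts the final identity outright at exactly these two steps, and as written that assertion is wrong.
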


\begin{proof}

Let us prove now that
$$
\lim_{t\to\infty}\frac{1}{t} \int_{0}^{t} \mu_{s}(\Delta) d s = \nu(\Delta) \quad \text { a.s. }
$$
For this, recall that we can use the following representation for measures $\mu_{0}$ and $\nu:$
$$
\mu_{0}=\sum_{k=0}^{\varkappa} \delta_{\xi_{k}}, \quad \nu=\sum_{k=0}^{\varkappa} \nu_{\eta_{k}}
$$
where $\varkappa, \{\xi_{k}\}_{k\geq 0}, \{\eta_{k}\}_{k \geq0}$ are independent and
$$
\varkappa \sim \text{Pois}(1), \quad \xi_{k} \sim N(0, I), \quad \eta_{k} \sim\left\|\xi_{k}\right\| .
$$

We split the proof into 3 steps:\\
Step I: $\mu_{0}=\sum_{k=1}^{N} \delta_{u_{k}}$, where $u_{k}$ are nonrandom,
$
u_{k} \neq u_{j}, k \neq j.
$
In this case $\mu_{t}=\sum_{k=1}^{N} \delta_{x\left(u_{k}, t\right)}$. From the 
form of the equation
$$
d x(u, t)=g\left(\left\|x\left(u_{1}, t\right)\right\|, \mu_{t}\right) A x\left(u_{1} t\right) d t,
$$
we have $\left\|x\left(u_{k}, t\right)\right\|=\left\|u_{k}\right\|$ for all $t \geq 0$, and $\quad\{x(u, t), t>0\}=\{v:\|v\|=\|u\|\}.$
From this,
$$
\begin{aligned}
& \frac{1}{t} \int_{0}^{t} \mu_{s}(\Delta) d s=\frac{1}{t} \int_{0}^{t} \sum_{k=1}^{N} \mathbb{I}_{\left\{x\left(u_{k}, s\right) \in \Delta\right\}} d s= \\
= & \sum_{k=1}^{N} \frac{1}{t} \int_{0}^{t} \mathbb{I}_{\left\{x\left(u_{k}, s\right) \in \Delta\right\}} d s.
\end{aligned}
$$
Applying ergodic theorem to $x(u,\cdot) ,$ we get
$$
\lim _{t \rightarrow \infty} \frac{1}{t} \int_{0}^{t} \mathbb{I}_{\{x(v, s) \in \Delta\}} d s=\mathbb{P}\left(X_{\|u\|} \in \Delta_{\|u\|}\right) \text {, }
$$
where $X_{r}$ is a random variable uniformly 
distributed on $S_{r}=\left\{v \in \mathbb{R}^{2}:\|v \cdot\|=r\right\}$ 
and $\quad \Delta_{r}=\{v \in \Delta:\|v\|=r\}$\\
So,
$$
\lim _{t \rightarrow \infty} \frac{1}{t} \int_{0}^{t} \mu_{s}(\Delta) d s=\sum_{k=1}^{N} \mathbb{P}\left(X_{\left\|u_{k}\right\|} \in \Delta_{\left\|u_{k}\right\|}\right)
$$

Step II : For the measure $\mu_{0}=\sum_{k=1}^{N} \delta_{\xi_{k}}$ that concentrated in random points $(\xi_k)_{k\geq 1}^N$ similarly we have
$$
\frac{1}{t} \int_{0}^{t} \mu_{s}(\Delta) d s=\sum_{k=1}^{N} \frac{1}{t} \int_{0}^{t} \mathbb I_{\left\{x\left(\xi_{k, s} s\right) \in \Delta\right\}} d s
$$
and a.s.
$$
\lim _{t \rightarrow \infty} \frac{1}{t} \int_{0}^{t} \mathbb{I}\{x(\xi, s) \in \Delta\} d s=\left.\mathbb{P}\left(X_{\| c \|} \in \Delta_{\|u\|}\right)\right|_{u=\xi}, \text { a.s.}
$$
So,
$$
\lim _{t \rightarrow \infty} \frac{1}{t} \int_{0}^{t} \mu_{s}(\Delta) d s=\sum_{k=1}^{N} \mathbb{P}\left(X_{\left\|\xi_{k}\right\|} \in \Delta_{\left\|\xi_{k}\right\|}\mid \xi_{k}\right)
$$
when $\left\{X_{r}\right\}$ and $\left\{\xi_{k}\right\}_{k \geq 1}$ are independent.\\

Step III. Now, for the random number of points,

$$
\mu_{0}=\sum_{k=1}^{\varkappa} \delta_{\xi_{k}}
$$
we have
$$
\lim _{t \rightarrow \infty} \frac{1}{t} \int_{0}^{t} \mu_{s}(\Delta) d s=\sum_{k=1}^{\infty} \mathbb{P}\left(X_{\left\|\xi_{k}\right\|} \in \Delta_{\left\|\xi_{k}\right\|} \mid \xi_{k}\right) .
$$
In the same way, we can prove that for $\Delta_k\subset \mathbb{R}^{2k}$
$$
\lim_{t\to\infty}\frac{1}{t} \int_{0}^{t} \mu^{k}_{s}(\Delta_k) d s = \nu^k(\Delta_k) \quad \text { a.s. }
$$
Now, to finish the proof, we note that from 
$$
\lim_{t\to\infty}\frac{1}{t} \int_{0}^{t} \mu^k_{s}(\Delta_K) d s = \nu^k(\Delta_k) \quad \text { a.s. for } k\geq 1,
$$
it follows that 
$$
\lim_{t\to\infty}\frac{1}{t} \int_{0}^{t} G(\mu_{s}) d s = G(\nu) \quad \text { a.s.,}
$$
where $G$ is a functional on space of measures of the form
$$
G(\mu)=\sum_{k=1}^N\sum_{i=1}^n a_{k,i}\mu^k(\Delta_{k,i})
$$
for some constants $a_i$ and subsets $\Delta_i\subset \mathbb R^{2k}.$
\end{proof}
\end{example}

\begin{example}
As we mentioned in the Introduction, vortex evolution, as given by the Hamiltonian equation, can be considered a differential equation with interaction. We can write $N-$vortex system as
$$
\left\{\begin{array}{l}
d x(u, t)=-\frac{1}{\pi} \int_{\mathbb{R}^{2}} \varphi\left(x(u, t), y(v, t), u_{0}, v_{0}\right) \mu_{t}\left(d u_{0} d v_{0}\right) d t \\
d y(v, t)=\frac{1}{\pi} \int_{\mathbb{R}^{2}} \psi\left(x(u, t), y(v, t), u_{0}, v_{0}\right) \mu_{t}\left(d u_{0} d v_{0}\right) d t \\
x(u, 0)=u \\
y(v, v)=v \\
\mu_{t}=\mu_{0} \cdot(x(\cdot, t), y(\cdot, t))^{-1},
\end{array}\right.
$$
where $\varphi\left(u, v, u_{0}, v_{0}\right)=\frac{v-v_{0}}{\left(u-u_{0}\right)^{2}+\left(v-v_{0}\right)^{2}} \mathbb{I}_{\left\{v \neq v_{0}\right\}},$  $\psi\left(u, v, u_{0}, v_{0}\right)=\frac{u-u_{0}}{\left(u-u_{0}\right)^{2}+\left(v-v_{0}\right)^{2}} \mathbb{I}_{\left\{u{\neq u_{0}}\right\}},$
and $\mu_t$ is a shift forward of a discrete measure $\mu_0$
$$
\mu_{0}=\sum_{j=1}^{N} \frac{1}{\Gamma_{j}} \delta_{\left(u_{j}, v_{j}\right)} ; \quad \mu_{t}=\mu_{0} \cdot V(\cdot, t)^{-1}=\sum_{j=1}^{N} \frac{1}{\Gamma_{j}} \delta_{V\left(u_{j}, v_{j}; t\right)}.
$$

In general, for arbitrary initial position $\mu_0,$ for number of vortexes $N\geq 4$ there is no itegrable solution to such systems. There are works devoted to describtion of conditions under which trajectories form choreographies \cite{calleja2018}. For such cases, the trajectories have periodical strucutre and for any invariant of the braid, that is formed by them, has the limit of the form 
$$
\lim_{t\to \infty} \frac {m_k(t)}{t^k}= C_k.
$$

\end{example}


\

\bibliography{main}
\bibliographystyle{unsrt}
\end{document}